\documentclass[a4paper,11pt,eqno]{article}
\usepackage[utf8]{inputenc}
\usepackage{amsmath}
\usepackage{amsfonts}
\usepackage{amssymb}
\usepackage{amsthm}
\usepackage{hyperref}
\usepackage{amsfonts}

\usepackage[all]{xy}
\usepackage{pdfpages}
\usepackage{fancyhdr}
\usepackage{makeidx}
\makeindex
\theoremstyle{plain}
\newtheorem{theorem}{Theorem}[section]
\newtheorem{prop}[theorem]{Proposition}
\newtheorem{lemm}[theorem]{Lemma}

\theoremstyle{definition}
\newtheorem{defin}[theorem]{Definition}
\newtheorem{exem}[theorem]{Example}
\newtheorem{rema}[theorem]{Remark}

 \renewcommand{\abstract}{\centerline{Abstract}}
 \date{}
 \title{\textbf{On a class of Algebras Satisfying polynomial identity of degree six}}

\author{Daouda KABRE and  Andr\'e CONSEIBO\\daoudakabre@yahoo.fr, andreconsebo@yahoo.fr\\D\'epartement de math\'ematiques\\Universit\'e Norbert ZONGO BP 376 Koudougou,
Burkina Faso }

\begin{document}
\pagestyle{empty}
\pagestyle{empty}
\thispagestyle{empty}
\thispagestyle{empty}
\setcounter{page}{0}

\thispagestyle{empty}
\setcounter{page}{0}
\thispagestyle{empty}
\setcounter{page}{0}

\pagestyle{plain}
\thispagestyle{empty}
\setcounter{page}{0}
\pagestyle{empty}
\addtocontents{toc}{\protect\thispagestyle{empty}\protect\pagestyle{empty}}
\thispagestyle{empty}
\setcounter{page}{0}
\pagestyle{plain}
\setcounter{page}{1}
\pagenumbering{arabic}
\maketitle

\begin{abstract}
In this paper we study the structure of a class of algebras satisfying a polynomial identity of degree 6. We show, assuming the existence of a non-zero idempotent, that if an algebra satisfies such an identity, it admits a Peirce decomposition related to this idempotent. We studied the algebraic structure and highlighted the connections of the algebras of this class with Bernstein algebras, train algebras, Jordan algebras and power associative algebras.\\
{\bf Keywords}: Peirce decomposition,  Bernstein algebra, Jordan algebra, Power associative algebra, train algebra,polynomial identity, idempotent.\\
\textbf{2020 Mathematics Subject Classification}: Primary 17D92, 17A05.
\end{abstract}

\section{Introduction}
The beginning of the study of Bernstein algebras goes back to $1923$ with the work of Serge Bernstein who gave a mathematical proof of the principle of stationarity of Hardy-Weinberg (\cite{Bernstein1942}). But it was P. Holgate who algebraically defined the objects currently known as Bernstein algebras in 1975 (\cite{AC9}). And since then, several authors have invaded this field of research through several publications (as examples, see \cite{Micali1989},\cite{AC1}, \cite{AC2}). The aim of this paper is to study a class of algebras verifying the polynomial identity $2x^2x^4=\omega(x)^2x^4+\omega(x)^4x^2$. This class of algebra, which contains the Bernstein algebra, models a population whose genetic crossing between the second generation and the fourth generation produces individuals with equal proportions of the genetic characters of both populations. We will first show that there exists an algebra verifying this identity but which is not a Bernstein algebra and prove that this algebra has an idempotent.
Assuming the existence of nonzero idempotent, we show that any algebra of this class admits a Peirce decomposition. The use of the Peirce decomposition will allow us to finally establish links between this class of algebras and well known algebras such as principal train algebras, Bernstein algebras, Jordan algebras and power associative algebras.

\section{Preliminaries}
Let $K$ be a commutative field and $A$ a commutative $K$-algebra, not necessarily associative.
For any element $x$ of $A$ we define the principal powers and the plenary powers of $x$ respectively by:\\
 $ x^1=x$, $x^{k+1}=xx^{k}$ and $ x^{[1]}=x$, $ x^{[k+1]}=x^{[k]}x^{[k]}$ for any integer $ k\geq 1$.

\begin{defin}
We will say that the algebra $A$ is:
\begin{enumerate}
  \item[i)]  a power associative if any monogenic subalgebra of $A$ is associative, that is, if $x^{i}x^{j}=x^{i+j}$ for all integers $i,j\geq 1$;
  \item[ii)] a Jordan algebra if $ x^2(yx) =(x^2y)x$, for all $x, y$ in $A$;
  \item[iii)] a  baric if there exists a non-zero morphism of algebras $\omega : A \rightarrow K$. The morphism $\omega$ is then called the weight function of the algebra $A$. The weight of an element $x$ of $A$ is the scalar $\omega(x)$.
\end{enumerate}
\end{defin}

 \begin{rema}
   Any Jordan algebra is a power associative algebra (\cite{AC3},\cite{AC4}).
 \end{rema}

\begin{defin}
 A baric $K$-algebra $(A,\omega)$ is a principal train algebra of rank  $n\geq 2$ if there are scalars $ \gamma_1,\dots, \gamma_{n-1}\in K$ such that $ x^{n}+ \gamma_1\omega(x)x^{n-1}+\dots+\gamma_{n-1}\omega(x)^{n-1}x= 0$, where the integer $n\geq 2$ is the smallest having this property.
\end{defin}

 \begin{defin}
 A baric $K$-algebra $(A,\omega)$ is a Bernstein algebra if $(x^{2})^{2}=\omega(x)^{2}x^{2}$ for any $x$ in $A$.
 \end{defin}

In the rest of the document, $K$ denotes an algebraically closed infinite commutative field with characteristic different from $2$.

In \cite{WB}, it is shown that if $A$ denotes a Bernstein algebra, then for any $x$ in $A$, $2x^ix^j=\omega(x)^jx^i+\omega(x)^jx^i$, $\forall i,j \geq 2$; in particular, for $ i=2 $ and $ j=4 $, $2x^{2}x^{4}=\omega(x)^{2}x^{4}+\omega(x)^{4}x^{2}, \forall x \in A$. In this paper, our attention will be focused on the structure of baric algebras satisfying the latter polynomial identity. We will show through the following example that there exists an algebra which verifies this polynomial identity but which is not a Bernstein algebra.

\begin{exem}
  Let ($A = < e_1, e_2, e_3>,\omega)$  be the commutative baric $K$-algebra whose multiplication table is given by:

  $e_1^2=e_1+e_3, e_2^2=e_3, e_{1}e_{2}=\frac{1}{2}e_{2}+e_{3}, e_{1}e_{3}=ze_{3}$ with $ z=\frac{-1-i\sqrt{23}}{4}$ and the other products being zero; $\omega: A\rightarrow K$, the algebras homomorphism that $\omega(e_{3})=\omega(e_2)=0$ and $\omega(e_1)=1$.

  Let us put $ x= \alpha e_1+\beta e_2+\eta e_3$. We have:\\
  $ x^{2}= \alpha^{2}e_{1}+\alpha\beta e_2+(\alpha^2+\beta^2+2\alpha\beta+2z\alpha\eta)e_3$,\\
   $x^3= \alpha^3 e_1+\alpha^2\beta e_2+ [(z+1)\alpha^{3}+(z+1)\alpha\beta^{2}+(2z+2)\alpha^{2}\beta+(2z^{2}+z)\alpha^{2}\eta]e_3$,\\
   $x^{4}=\alpha^{4} e_{1}+\alpha^{3}\beta e_{2}+ [(z^{2}+z+1)\alpha^{4}+(z^{2}+z+1)\alpha^{2}\beta^{2}+(2z^{2}+2z+2)\alpha^{3}\beta+(2z^{3}+z^{2}+z)\alpha^{3}\eta]e_{3} $, which implies that\\
   $ 2x^{2}x^{4}= 2\alpha^{6} e_{1}+2\alpha^{5}\beta e_{2}+ 2[(z^{3}+z^{2}+2z+1)\alpha^{6}+(z^{3}+z^{2}+2z+1)\alpha^{4}\beta^{2}+(2z^{3}+2z^{2}+4z+2)\alpha^{5}\beta+(2z^{4}+z^{3}+3z^{2})\alpha^{5}\eta]e_3$.

   Since $\omega(x)=\alpha $, therefore:\\
$\omega(x)^{2}x^{4}= \alpha^{2}x^{4}=\alpha^{6} e_{1}+\alpha^{5}\beta e_{2}+ [(z^{2}+z+1)\alpha^{6}+(z^{2}+z+1)\alpha^{4}\beta^2+(2z^2+2z+2)\alpha^5\beta+(2z^3+z^2+z)\alpha^5\eta]e_3$ and\\ $\omega(x)^{4}x^{2}= \alpha^{4}x^{2}= \alpha^{6}e_{1}+\alpha^{5}\beta e_{2}+(\alpha^6+\alpha^4\beta^2+2\alpha^5\beta+2z\alpha^5\eta)e_3$, so $\omega(x)^{2}x^{4}+\omega(x)^{4}x^{2}=2\alpha^{6} e_{1}+2\alpha^{5}\beta e_2+ [(z^2+z+2)\alpha^6+(z^2+z+2)\alpha^4\beta^2+(2z^2+2z+4)\alpha^5\beta+(2z^3+z^2+3z)\alpha^5\eta]e_{3}$.

We have $2z^3+2z^{2}+4z+2=z^2+z+2$, $4z^3+4z^2+8z+4=2z^2+2z+4$ and $4z^4+2z^3+6z^2=2z^{3}+z^2+3z$, thus $A$ satisfies the identity $2x^2x^4=\omega(x)^2x^4+\omega(x)^4x^2$.

   We have also $(x^2)^2=\alpha^4 e_1+\alpha^3\beta e_2+ [(2z+1)\alpha^4+( 2z+1)\alpha^2\beta^2+(2z+2)\alpha^3\beta+2z^2\alpha^3\eta]e_3$, so $ (x^{2})^{2}\neq \omega(x)^{2}x^{2} $ and $A$ is not Bernstein algebra. Therefore, $A$ is an algebra verifying a polynomial identity of degree $6$ which is not a Bernstein algebra. We show that the set of nonzero idempotents of $A$ is $\{e_1+ae_2+\frac{(1+a)^2}{1-2z}, a\in K\}$.
\end{exem}	

\section{Peirce decomposition}
\begin{lemm}\label{p1}
Let $(A,\omega)$ be an $K$-algebra verifying:
\begin{equation}\label{e1}
		2x^{2}x^{4}=\omega(x)^{2}x^{4}+\omega(x)^{4}x^{2}
\end{equation}
		For all $x, y$ in $A$ we have:\\
		\begin{enumerate}
			\item[\textbf{i)}] $4x^2(x(x(xy)))+2x^2(x(x^2y))+2x^2(x^3y)+4x^4(xy)=\omega(x)^2[2x(x(xy))+x(x^2y)+x^3y]+2\omega(xy)x^4+4\omega(x^3y)x^2+2\omega(x)^4(xy)$;
			\item[\textbf{ii)}] $x^2[4z(x(xy))+4x(z(xy))+4x(x(yz))+2z(x^2y)+4x(y(xz))+2y(x^2z)+4y(x(xz))]+ 4xz[2x(x(xy))+x(x^2y)+x^3y]+4xy[x^3z+x(x^2z)+2x(x(xz))]+4x^4(yz) = \omega(x)^2[2z(x(xy))+2x(z(xy))+2x(x(yz))+2y(x(xz))+2x(y(xz))+z(x^2y)+y(x^2z)]+ 2\omega(xz)[2x(x(xy))+x(x^2y)+x^3y]+2\omega(xy)[2x(x(xz))+x(x^2z)+x^3z]+12\omega(x^2yz)x^2+8\omega(x^3y)xz+8\omega(x^3z)xy+2\omega(x)^4yz+2\omega(yz) x^4$;
	\end{enumerate}
\end{lemm}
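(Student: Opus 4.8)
The plan is to derive both identities by linearizing the defining identity \eqref{e1}. The only ingredient beyond direct computation is the standard fact that, over an infinite field $K$, a relation $\sum_{k} a_k t^k = 0$ with $a_k \in A$ that holds for every $t \in K$ forces each $a_k$ to vanish; this is what lets us pass from a substitution to a coefficient-by-coefficient identity.

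For i), I would substitute $x \mapsto x + ty$ into \eqref{e1}, with $y \in A$ fixed and $t$ a scalar parameter, and expand each side as a polynomial in $t$. Using commutativity one gets $(x+ty)^2 = x^2 + 2t\,xy + t^2 y^2$, and then recursively the $t$-linear coefficients of $(x+ty)^3 = (x+ty)(x+ty)^2$ and of $(x+ty)^4 = (x+ty)(x+ty)^3$ are $x^2 y + 2x(xy)$ and $x^3 y + x(x^2y) + 2x(x(xy))$; since $\omega$ is linear, $\omega(x+ty) = \omega(x) + t\,\omega(y)$, so the scalar factors expand the same way. Equating the coefficients of $t^1$ on the two sides of \eqref{e1}, and using that $\omega$ is an algebra morphism to rewrite $\omega(x)\omega(y) = \omega(xy)$ and $\omega(x)^3\omega(y) = \omega(x^3y)$, gives exactly i).

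For ii), I would apply the same procedure to identity i), which by the previous paragraph now holds for all $x, y \in A$: keep $y$ fixed, substitute $x \mapsto x + tz$, and read off the coefficient of $t^1$. Concretely, every monomial of i) contributes the sum of the expressions obtained by replacing a single occurrence of $x$ by $z$ — this applies to the factors inside the products and, after differentiating $\omega(x)^k$, to the arguments of $\omega$ as well — while the subterms $x^2, x^3, x^4$ are expanded by the first-order formulas above. Assembling all the contributions, using commutativity to identify products such as $(xz)y$ with $y(xz)$ and $x(x(zy))$ with $x(x(yz))$, and simplifying the scalar coefficients through the multiplicativity of $\omega$ (for instance $\omega(x)^2\omega(y)\omega(z) = \omega(x^2yz)$), one obtains ii).

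The substantive difficulty is purely organizational: ii) contains a large number of monomials, and since $A$ is not associative one has to track the bracketing carefully when each occurrence of $x$ is replaced, so the risk is clerical error rather than a genuine gap. An alternative is to substitute $x \mapsto x + sy + tz$ directly into \eqref{e1} and extract the coefficient of $st$, but performing the one-variable linearization twice keeps the intermediate expressions shorter and makes the match with the stated form of ii) more transparent.
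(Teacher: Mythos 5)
Your proposal is correct and takes the same route as the paper, whose entire proof is the one-line remark that the identities follow by partial linearization of \eqref{e1}; your substitution $x\mapsto x+ty$ with extraction of the $t^1$ coefficient (and the iterated linearization in $z$ for ii)) is exactly that partial linearization, and your first-order terms for $(x+ty)^2$, $(x+ty)^3$, $(x+ty)^4$ reproduce identity i) verbatim. Nothing further is needed.
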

	
	\begin{proof}
	Theses identities are obtained by a partial linearization of identity (\ref{e1}).
	\end{proof}

The identities of the previous lemma allow us to establish that any algebra satisfying (\ref{e1}) and having a nonzero idempotent admits a Peirce decomposition.
	\begin{theorem}\label{t0}
		Let $(A,\omega)$ be a $K$-algebra verifying (\ref{e1}) and $e$ be a non-zero idempotent of $A$. Then $A$ admits a Peirce decomposition relative to $e$: $A= Ke\oplus A_0\oplus A_{\frac{1}{2}}\oplus A_{\lambda} \oplus A_{\bar{\lambda}}$  where $A_{\alpha}=\{x\in Ker\omega, ex = \alpha x\}$, with $\alpha\in \{0;\frac{1}{2};\lambda= \frac{-1-i\sqrt{23}}{4}; \bar{\lambda}= \frac{-1+i\sqrt{23}}{4} \}$.
	\end{theorem}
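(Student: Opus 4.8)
The plan is to diagonalize the left multiplication operator $L_e\colon A\to A$, $L_e(a)=ea$, restricted to the weight-zero hyperplane $\ker\omega$, using the single substitution $x=e$ in the linearized identity of Lemma~\ref{p1}. First I would pin down the weight of $e$: since $\omega$ is multiplicative, $\omega(e)=\omega(e^2)=\omega(e)^2$, so $\omega(e)\in\{0,1\}$; substituting $x=e$ in (\ref{e1}) and using that $e^k=e$ for every $k\ge 1$ gives $2e=\bigl(\omega(e)^2+\omega(e)^4\bigr)e$, and since $e\neq 0$ and $\mathrm{char}\,K\neq 2$ this forces $\omega(e)^2+\omega(e)^4=2$, hence $\omega(e)=1$. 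Consequently $A=Ke\oplus\ker\omega$ via $a=\omega(a)e+\bigl(a-\omega(a)e\bigr)$, and $L_e$ stabilizes $\ker\omega$ because $\omega(ea)=\omega(e)\omega(a)=0$ whenever $\omega(a)=0$.

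Next I would specialize Lemma~\ref{p1}~i) at $x=e$. Since $e^2=e^3=e^4=e$, every iterated product of $e$'s with $y$ collapses to a power of $L_e$ applied to $y$, and for $y\in\ker\omega$ the weight terms $\omega(ey)=\omega(e^3y)=\omega(y)$ vanish. After collecting terms the identity becomes the operator relation
\[ 4L_e^4y+2L_e^3y+6L_e^2y \;=\; 2L_e^3y+L_e^2y+3L_ey \qquad (y\in\ker\omega), \]
i.e.\ $\bigl(4L_e^4+5L_e^2-3L_e\bigr)y=0$ for all $y\in\ker\omega$; thus the restriction $N:=L_e|_{\ker\omega}$ is annihilated by the polynomial $p(t)=4t^4+5t^2-3t$.

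To conclude I would factor $p$: one checks $4t^3+5t-3=(2t-1)(2t^2+t+3)$, whence $p(t)=4\,t\bigl(t-\tfrac12\bigr)(t-\lambda)(t-\bar\lambda)$ with $\lambda=\frac{-1-i\sqrt{23}}{4}$ and $\bar\lambda=\frac{-1+i\sqrt{23}}{4}$ the two roots of $2t^2+t+3$. The four scalars $0,\tfrac12,\lambda,\bar\lambda$ are pairwise distinct, so $p$ is separable; since $p(N)=0$ and $K$ is algebraically closed, $N$ is diagonalizable with spectrum contained in $\{0,\tfrac12,\lambda,\bar\lambda\}$. Writing $A_\alpha=\{x\in\ker\omega : ex=\alpha x\}$ this gives $\ker\omega=A_0\oplus A_{1/2}\oplus A_\lambda\oplus A_{\bar\lambda}$ (some summands possibly zero), and combining with $A=Ke\oplus\ker\omega$ yields the asserted Peirce decomposition.

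I do not expect a genuine obstacle: the whole argument reduces to that one specialization of the already-linearized identity of Lemma~\ref{p1} plus elementary linear algebra. The only points demanding a little care are the coefficient bookkeeping in the substitution $x=e$ and the remark that $p$ has simple roots --- so that $N$ really splits into eigenspaces and not merely into generalized eigenspaces --- which rests on $\mathrm{char}\,K\neq 2$ and on $K$ being algebraically closed.
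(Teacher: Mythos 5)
Your proposal is correct and follows essentially the same route as the paper: specialize identity i) of Lemma~\ref{p1} at $x=e$, $y\in\ker\omega$ to get the annihilating polynomial $4t^4+5t^2-3t=4t(t-\tfrac12)(t-\lambda)(t-\bar\lambda)$ for $L_e|_{\ker\omega}$, then split $\ker\omega$ into the corresponding eigenspaces. Your added preliminary checks (that $\omega(e)=1$, and that the separability of the annihilating polynomial is what guarantees genuine eigenspaces rather than generalized ones) are left implicit in the paper but are welcome precisions.
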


\begin{proof}
By considering the identity i) of the Lemma \ref{p1}and then setting $x=e$ and $y \in Ker\omega$ we obtain: $6e(ey)+2e(e(ey))+4e(e(e(ey)))=3ey+e(ey)+2e(e(ey))$, which implies that $4e(e(e(ey)))+5e(ey)-3ey=0$. By noting $\ell_e=L_e/ker(\omega)$ where $L_e : A \longrightarrow A, x \mapsto ex$, we have $4\ell_e^4+5\ell_e^2-3\ell_e=0$.
Thus $P(X)= 4X^{4}+5X^2-3X=4X(X-\frac{1}{2})(X-\lambda)(X-\bar{\lambda})$ (with $\lambda= \frac{-1-i\sqrt{23}}{4}$ and $\bar{\lambda}= \frac{-1+i\sqrt{23}}{4}$ ) is the minimal polynomial of $\ell_e$. According to the kernel lemma: $kerP(\ell_e)= ker\ell_e\oplus ker(\ell_e-\frac{1}{2}I)\oplus ker(\ell_e-\lambda I)\oplus ker(\ell_e-\bar{\lambda}I)$. By setting $A_{\alpha}=  ker(\ell_e-\alpha I)$, with $\alpha\in \{0;\frac{1}{2};\lambda= \frac{-1-i\sqrt{23}}{4}; \bar{\lambda}= \frac{-1+i\sqrt{23}}{4} \}$, we obtain the following decomposition: $A= Ke\oplus A_0\oplus A_{\frac{1}{2}}\oplus A_{\lambda} \oplus A_{\bar{\lambda}}$.
\end{proof}	
	
	\begin{theorem}\label{t1}
		Let $ A= K_{e}\oplus A_0\oplus A_{\frac{1}{2}}\oplus A_{\lambda} \oplus A_{\bar{\lambda}}$ be the Peirce decomposition of an algebra verifying \eqref{e1}, then:
		\begin{enumerate}
			\item[i)] $ A_{0}A_{0}\subset A_{\frac{1}{2}}$;
			\item[ii)] $ A_{\frac{1}{2}}A_{\frac{1}{2}}\subset A_0\oplus A_{\lambda} \oplus A_{\bar{\lambda}}$;
			\item[iii)] $ A_{\lambda} A_{\bar{\lambda}}={0}$;
			\item[iV)] $ A_{\lambda} A_{\lambda}={0}$;
			\item[$ \dot{V}) $] $ A_{\bar{\lambda}} A_{\bar{\lambda}}={0}$:
			\item[Vi)] $ A_{0}A_{\frac{1}{2}} \subset A_{\frac{1}{2}} \oplus A_{\lambda} \oplus A_{\bar{\lambda}}$;	
			\item[Vii)] $ A_{\lambda}A_{\frac{1}{2}} \subset A_{\frac{1}{2}} \oplus A_{0} \oplus A_{\bar{\lambda}}$;	
			\item[Viii)] $ A_{\bar{\lambda}}A_{\frac{1}{2}} \subset A_{\frac{1}{2}} \oplus A_{0} \oplus A_{\lambda}$;	
			\item[iX)] $ A_{0}A_{\lambda}\subset A_{\frac{1}{2}}$;
			\item[X)] $A_{0}A_{\bar{\lambda}}\subset A_{\frac{1}{2}}$.
		\end{enumerate}
	\end{theorem}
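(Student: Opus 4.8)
The plan is to substitute $x=e$ into the bilinear identity \textbf{ii)} of Lemma~\ref{p1}, taking $y\in A_\alpha$ and $z\in A_\beta$ for each unordered pair $\alpha,\beta\in\{0,\tfrac12,\lambda,\bar\lambda\}$, and to distil from it a single polynomial relation satisfied by $\ell_e$ applied to the product $yz$. Since $e$ is idempotent we have $e^{2}=e^{3}=e^{4}=e$ and $\omega(e)=1$, and since $A_\alpha,A_\beta\subset\ker\omega$, every summand on the right-hand side of \textbf{ii)} carrying a factor $\omega(xy)$, $\omega(xz)$, $\omega(x^{2}yz)$, $\omega(x^{3}y)$, $\omega(x^{3}z)$ or $\omega(yz)$ vanishes. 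Using $ey=\alpha y$ and $ez=\beta z$ repeatedly, each surviving monomial collapses to a scalar multiple of $\ell_e^{k}(yz)$ for some $k\in\{0,1,2,3\}$: in the two blocks $4xz[\cdots]$ and $4xy[\cdots]$ the brackets become scalar multiples of $y$ and of $z$, so these blocks contribute multiples of $(ez)y=\beta(yz)$ and $(ey)z=\alpha(yz)$. Collecting everything, \textbf{ii)} becomes
\begin{equation*}
R_{\alpha,\beta}(\ell_e)(yz)=0,\qquad R_{\alpha,\beta}(X)=4X^{3}+(4\alpha+4\beta-2)X^{2}+4(\alpha^{2}+\beta^{2}+1)X+c_{\alpha,\beta},
\end{equation*}
where $c_{\alpha,\beta}=8\alpha\beta(\alpha^{2}+\beta^{2})+4\alpha\beta(\alpha+\beta)+8\alpha\beta-2(\alpha^{2}+\beta^{2})-(\alpha+\beta)-2$.

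Next I would use Theorem~\ref{t0}: since $yz\in\ker\omega$, write $yz=v_{0}+v_{1/2}+v_{\lambda}+v_{\bar\lambda}$ with $v_{\gamma}\in A_{\gamma}=\ker(\ell_e-\gamma I)$, so that $R_{\alpha,\beta}(\ell_e)(yz)=\sum_{\gamma}R_{\alpha,\beta}(\gamma)v_{\gamma}$; directness of the Peirce sum then forces $v_{\gamma}=0$ whenever $R_{\alpha,\beta}(\gamma)\neq 0$. Thus $A_\alpha A_\beta$ is contained in the sum of those $A_\gamma$ with $R_{\alpha,\beta}(\gamma)=0$, and the ten assertions reduce to deciding, case by case, which of $0,\tfrac12,\lambda,\bar\lambda$ annihilate the cubic $R_{\alpha,\beta}$. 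Up to the conjugation $\lambda\leftrightarrow\bar\lambda$ there are seven cases, and when evaluating $R_{\alpha,\beta}$ at $\lambda$ or $\bar\lambda$ one uses that these are the two roots of $2X^{2}+X+3$, i.e.\ $\lambda^{2}=-\tfrac12\lambda-\tfrac32$, $\lambda+\bar\lambda=-\tfrac12$ and $\lambda\bar\lambda=\tfrac32$, which turns each evaluation into a scalar or a visibly nonzero affine expression in $\lambda,\bar\lambda$. For instance $R_{0,0}(X)=2(2X-1)(X^{2}+1)$, whose only root among the four is $\tfrac12$, gives \textbf{i)}; $R_{1/2,1/2}(X)=2X(2X^{2}+X+3)$, with roots $0,\lambda,\bar\lambda$, gives \textbf{ii)}; $R_{0,1/2}(X)=4X^{3}+5X-3$ (equal to $P(X)/X$ for the polynomial $P$ of Theorem~\ref{t0}), with roots $\tfrac12,\lambda,\bar\lambda$, gives \textbf{Vi)}; $R_{0,\lambda}(X)=4X^{3}+(4\lambda-2)X^{2}-(2\lambda+2)X+1$, whose only root among the four is $\tfrac12$, gives \textbf{iX)}; $R_{\lambda,1/2}(X)=X\bigl(4X^{2}+4\lambda X-(2\lambda+1)\bigr)$, with roots $0,\tfrac12,\bar\lambda$, gives \textbf{Vii)}; and a direct evaluation shows that none of $0,\tfrac12,\lambda,\bar\lambda$ annihilates $R_{\lambda,\lambda}$ or $R_{\lambda,\bar\lambda}$, giving \textbf{iV)} and \textbf{iii)}. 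The remaining parts \textbf{$\dot{V}$)}, \textbf{Viii)} and \textbf{X)} then follow from \textbf{iV)}, \textbf{Vii)} and \textbf{iX)} by the symmetry $\lambda\leftrightarrow\bar\lambda$, which fixes $0$, $\tfrac12$ and the identity~\eqref{e1}.

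The main obstacle is the bookkeeping in the first step: identity \textbf{ii)} is long, and one must be careful both about which monomials genuinely vanish after setting $x=e$ and about reducing each surviving monomial to the normal form $\ell_e^{k}(yz)$, since a misplaced power of $\alpha$ or $\beta$ would change $R_{\alpha,\beta}$ and could spoil a case. It is also worth recording that $R_{\alpha,\beta}$ is genuinely cubic in every case, its leading coefficient $4$ being invertible since $\operatorname{char}K\neq 2$; hence it has at most three roots, so at most three Peirce components of $yz$ can survive, which matches the shape of every stated inclusion, and in the cases \textbf{iii)}--\textbf{$\dot{V}$)}, where the product must vanish, this is exactly where the specific value $\lambda=\tfrac{-1-i\sqrt{23}}{4}$ enters.
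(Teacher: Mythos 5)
Your proposal is correct and follows essentially the same route as the paper: substituting $x=e$, $y\in A_\mu$, $z\in A_\gamma$ into identity ii) of Lemma \ref{p1} yields exactly the cubic relation $R_{\mu,\gamma}(\ell_e)(yz)=0$ that the paper derives as its relation (4), and the ten inclusions are then read off from which of $0,\tfrac12,\lambda,\bar\lambda$ are roots of that cubic, just as in the paper's case-by-case factorizations. (Incidentally, your constant term gives $R_{\lambda,\bar\lambda}(X)=4X^{3}-4X^{2}-7X-20$ where the paper writes $-16$; this appears to be a slip in the paper and changes nothing, since none of the four eigenvalues is a root either way.)
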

	
	\begin{proof}
	Indeed, for $x=e$ and $y, z \in ker\omega$ in relation ii) of Lemma \ref{p1}, we have:
	\begin{multline}
	8(ez)(e(e(ey)))+ 4e(z(e(ey)))+ 4e(e(z(ey)))+ 4e(e(e(yz)))+ 4(ez)(e(ey)))+2e(z(ey))\\
	+ 4e(e(y(ez)))+8(ez)(ey)+2e(y(ez))+ 4e(y(e(ez)))+4ey(e(ez))+8(ey)(e(e(ez)))+4e(yz)\\
	-2z(e(ey))-2e(z(ey))-2e(e(yz))-z(ey)-2e(y(ez))-y(ez)-2y(e(ez))-2yz=0
	\end{multline}
	For $y\in A_{\mu}$ and $z\in A_{\gamma}$ we have $ey=\mu y$ and $ez=\gamma z$, so relation (2) becomes:
	\begin{multline}
	4e(e(e(yz)))+ (4\mu +4\gamma-2)(e(e(yz)))+ (4\mu^{2}+4\gamma^{2}+4)e(yz)\\
	+(8\gamma \mu^{3}+8\gamma^{3}\mu+4\gamma^{2}\mu+4\gamma \mu^{2}+8\gamma \mu-2\mu^{2}-2\gamma^{2}-\mu-\gamma-2) (yz)=0
	\end{multline}, so
	\begin{multline}
	[4\ell_e^3+ (4\mu +4\gamma-2)\ell_e^2+(4\mu^{2}+4\gamma^{2}+4)\ell_e\\
	+(8\gamma \mu^3+8\gamma^3\mu+4\gamma^2\mu+4\gamma \mu^2+8\gamma \mu-2\mu^2-2\gamma^2-\mu-\gamma-2)I] (yz)=0
	\end{multline}
Let us examine this relation by discussing the values of $\gamma$ and $\mu$.
	$\underline{i)}$ $ \gamma=\mu=0 $\\
	In this case the relation becomes $ (2\ell_e^3-\ell_e^2+2\ell_e-I)(yz)=0$ which implies that $(\ell_e-\frac{1}{2}I)(\ell_e^2+I)(yz)=0$. Since $\ell_e^2+I$ is injective then $(\ell_e-\frac{1}{2}I)(yz)=0$, thus $yz \in A_{\frac{1}{2}}$ and i).\\
	$\underline{ii)}$ $\gamma=\mu=\frac{1}{2}$\\
	The relation gives $(2\ell_e^3+\ell_e^{2}+3\ell_e)(yz)=0$  and $0, \lambda, \bar{\lambda}$ being roots of $2X^3+X^2+3X$, thus  $yz \in A_0\oplus A_{\lambda}\oplus A_{\bar{\lambda}}$ thus ii).\\
$\underline{iii)} $ $ \gamma=\bar{\lambda} \quad and \quad \mu=\lambda $\\
	We have $(4\ell_e^3-4\ell_e^2-7\ell_e-16I)(yz)=0 $ and since $0,\frac{1}{2}, \lambda, \bar{\lambda}$ are not roots of $4X^3-4X^2-7X-16$ hence iii).\\
$ \underline{iv)} $ $ \gamma=\lambda \quad and \quad \mu=\lambda $\\
	Then $(4\ell_e^3- (4+2i\sqrt{23})\ell_e^2+ (-7+i\sqrt{23}) \ell_e+(26-2i\sqrt{23})I)(yz)=0$ and hence iV).\\
	$ \underline{v)} $ $ \gamma=\bar{\lambda} \quad and \quad \mu=\bar{\lambda }$\\
	 we have $ (4\ell_e^3+(-4+2i\sqrt{23})\ell_e^2- (7+i\sqrt{23})\ell_e+(26+2i\sqrt{23})I)(yz)=0$ and V) hold.\\
$\underline{vi)} $ $ \gamma=\frac{1}{2} \quad and \quad \mu=0$\\
	We have $ (4\ell_e^3+5\ell_e-3I)(yz)=0 $, which  implies that $4(\ell_e-\frac{1}{2}I)(\ell_e-\lambda I)(\ell_e-\bar{\lambda}I)(yz)=0$, hence Vi).\\
	$\underline{vii)}$ $\gamma=\lambda \quad and \quad \mu=\frac{1}{2}$\\
	We get $ (4\ell_e^3+(-1-i\sqrt{23})\ell_e^2+(\frac{-1+i\sqrt{23}}{2})\ell_e)(yz)=0$ and then Vii).\\
	$\underline{viii)}$ $ \gamma=\bar{\lambda} \quad and \quad \mu=\frac{1}{2}$\\
	 we have $(4\ell_e^3+(-1+i\sqrt{23})\ell_e^2+(\frac{-1-i\sqrt{23}}{2})\ell_e)(yz)=0$  and $0, \frac{1}{2}, \lambda $, hence $yz \in A_{\frac{1}{2}}\oplus A_0\oplus A_{\lambda}$, thus  Viii) is roved.\\
	$ \underline{ix)} $ $ \gamma=\lambda \quad and \quad \mu=0$\\
	 we have $(4\ell_e^3+(-3-i\sqrt{23})\ell_e^2+(\frac{-3+i\sqrt{23}}{2})\ell_e+I)(yz)=0 $, then $yz \in A_{\frac{1}{2}}$ and we have iX).\\
	$\underline{x)}$ $\gamma=\bar{\lambda} \quad and \quad \mu=0$\\
	 we have $ (4le^{3}+(-3+i\sqrt{23})le^{2}+(\frac{-3-i\sqrt{23}}{2})le+I)(yz)=0 $; et $\frac{1}{2} $ and we show that x) hold.
\end{proof}
	
	\begin{lemm}\label{l2}
		If $ A_{\lambda}=A_{\bar{\lambda}} =0$ for all $x_0\in A_0$ and $x_{\frac{1}{2}}\in A_{\frac{1}{2}}$, we have:
		\begin{enumerate}
			\item[i)] $ (x_{0}^{2})^{2}=0$;
			\item[ii)] $x_{\frac{1}{2}}^3=0$;
			\item[iii)] $( x_{\frac{1}{2}}^{2})^{2} =0$;
			\item[iV)]$x_{\frac{1}{2}}x_{0}^{2} =0$;
			\item[V)] $x_{\frac{1}{2}}(x_{\frac{1}{2}}x_{0})=0 $;
			\item[Vi)] $(x_{0}x_{\frac{1}{2}})^{2}=0$;
			\item[Vii)] $x_{\frac{1}{2}}^{2}(x_{0}x_{\frac{1}{2}})=0$.
		\end{enumerate}
	\end{lemm}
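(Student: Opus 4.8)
Throughout this plan I write $N_{0}=A_{0}$, $N_{\frac12}=A_{\frac{1}{2}}$ for the eigenspaces. The plan is to exploit the hypothesis $A_{\lambda}=A_{\bar{\lambda}}=0$. Then $A=Ke\oplus A_{0}\oplus A_{\frac{1}{2}}$ and, by Theorem~\ref{t1}, the multiplication rules collapse to $A_{0}A_{0}\subseteq A_{\frac{1}{2}}$, $A_{\frac{1}{2}}A_{\frac{1}{2}}\subseteq A_{0}$, $A_{0}A_{\frac{1}{2}}\subseteq A_{\frac{1}{2}}$, together with $e^{2}=e$, $eA_{0}=0$ and $eA_{\frac{1}{2}}=\frac{1}{2}A_{\frac{1}{2}}$. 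Since $\omega$ is an algebra morphism with $\omega(e)=1$ that vanishes on $A_{0}$ and $A_{\frac{1}{2}}$, every element $x=e+sx_{0}+tx_{\frac{1}{2}}$ (with $s,t\in K$) has weight $1$, so \eqref{e1} reduces to $2x^{2}x^{4}=x^{4}+x^{2}$. I would substitute such elements into this identity, compute the principal powers $x^{2}$, $x^{3}=xx^{2}$, $x^{4}=xx^{3}$ and the product $x^{2}x^{4}$ using only the rules above, and then compare the two sides as polynomials in $s,t$. Because $K$ is infinite the coefficient of each monomial $s^{i}t^{j}$ must vanish; and because the Peirce rules assign every monomial a definite summand, each such coefficient equation splits further into one equation per component $Ke$, $A_{0}$, $A_{\frac{1}{2}}$.

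For i), ii), iii) a single parameter suffices. With $x=e+tx_{0}$ one finds $x^{2}=e+t^{2}x_{0}^{2}$, then $x^{3}=e+\frac{t^{2}}{2}x_{0}^{2}+t^{3}x_{0}^{3}$ and $x^{4}=e+\frac{t^{2}}{4}x_{0}^{2}+t^{3}x_{0}^{3}+t^{4}x_{0}^{4}$ (each $x_{0}^{k}\in A_{\frac{1}{2}}$ for $k\ge 2$), whence $2x^{2}x^{4}-(x^{4}+x^{2})=\frac{t^{4}}{2}(x_{0}^{2})^{2}+2t^{5}x_{0}^{2}x_{0}^{3}+2t^{6}x_{0}^{2}x_{0}^{4}$, and the coefficient of $t^{4}$ is precisely i). With $x=e+tx_{\frac{1}{2}}$, using $ex_{\frac{1}{2}}=\frac{1}{2}x_{\frac{1}{2}}$ and $x_{\frac{1}{2}}^{2}\in A_{0}$, the analogous expansion makes the coefficient of $t^{3}$ a nonzero scalar multiple of $x_{\frac{1}{2}}^{3}$, giving ii); and the coefficient of $t^{4}$ is a relation $2x_{\frac{1}{2}}^{4}+2(x_{\frac{1}{2}}^{2})^{2}=0$ whose two summands lie in $A_{0}$ and in $A_{\frac{1}{2}}$ respectively, so each vanishes separately, giving iii) (and $x_{\frac{1}{2}}^{4}=0$). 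Note that i)--iii) hold for arbitrary elements of $A_{0}$ resp. $A_{\frac{1}{2}}$ --- the symbols $x_{0},x_{\frac{1}{2}}$ merely name such elements --- hence in particular for products like $x_{0}^{2}$ or $x_{0}x_{\frac{1}{2}}$ that again land in these spaces; this will be used repeatedly below.

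For iv)--vii), which mix the two spaces, I would run the same scheme with $x=e+sx_{0}+tx_{\frac{1}{2}}$, carrying i)--iii) along to delete terms already known to vanish. Expanding $x^{2},x^{3},x^{4},x^{2}x^{4}$ and inserting into \eqref{e1}: the only monomial in the coefficient of $s^{2}t$ that lands in $A_{0}$ is $x_{\frac{1}{2}}x_{0}^{2}$, so the $A_{0}$-part of that equation is iv); the only $A_{0}$-monomial in the coefficient of $st^{2}$ is $x_{\frac{1}{2}}(x_{\frac{1}{2}}x_{0})$, giving v); and, once iv) and v) are available, the coefficient of $st^{3}$ has $x_{\frac{1}{2}}^{2}(x_{0}x_{\frac{1}{2}})$ as its only surviving $A_{\frac{1}{2}}$-monomial (the remaining ones being killed by ii) and v)), giving vii), while the coefficient of $s^{2}t^{2}$, after the relations already obtained are substituted, yields vi). Alternatively vii) can be read off from $v^{3}=0$ applied to $v=x_{0}x_{\frac{1}{2}}+\mu x_{\frac{1}{2}}\in A_{\frac{1}{2}}$ after using v) (compare the coefficient of $\mu$), and similar polarizations of the identities of the previous paragraph --- or of Lemma~\ref{p1} with $x=e+x_{0}$ or $x=e+x_{\frac{1}{2}}$ and $y,z\in\{x_{0},x_{\frac{1}{2}}\}$ --- can replace several of the brute-force expansions.

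The main difficulty is the length and care of the computation rather than anything conceptual. Two points must be watched. First, $A$ is not yet known to be power associative, so $x^{4}$ must always be read as the principal power $x\cdot x^{3}$, never silently replaced by $(x^{2})^{2}$, and inside nested products such as $x_{0}(x_{0}x_{\frac{1}{2}})$ or $x_{\frac{1}{2}}(x_{\frac{1}{2}}x_{0})$ the bracketing is essential; the Peirce rules of Theorem~\ref{t1} are exactly what keep each monomial in a single summand and so legitimise the component-by-component comparison. Second, the seven identities are interdependent --- iii) uses ii), and iv)--vii) use i)--iii) --- so they must be proved in the stated order. The genuinely delicate step is to verify that, in the relevant component of the relevant coefficient, the intended monomial of iv)--vii) is the unique survivor (after i)--iii), and for vi)--vii) also iv)--v), have been substituted) and that it carries a nonzero scalar; everything else is routine linear bookkeeping.
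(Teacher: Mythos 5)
Your proposal is correct and follows essentially the same route as the paper: it specializes the identity to weight-one elements $e+\alpha x_{0}$, $e+\beta x_{\frac{1}{2}}$ and $e+\alpha x_{0}+\beta x_{\frac{1}{2}}$, uses the collapsed Peirce table $A_{0}^{2}\subset A_{\frac{1}{2}}$, $A_{\frac{1}{2}}^{2}\subset A_{0}$, $A_{0}A_{\frac{1}{2}}\subset A_{\frac{1}{2}}$, and identifies coefficients of the monomials $\alpha^{i}\beta^{j}$ (splitting further by Peirce component). Your explicit remarks on the component separation and on the polarization of v) needed to isolate $(x_{0}x_{\frac{1}{2}})^{2}$ in the $\alpha^{2}\beta^{2}$ coefficient supply details the paper leaves implicit.
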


	\begin{proof}
Suppose that $A_{\lambda}= A_{\bar{\lambda}}=0$. According to Theorem \ref{t1}, we have $A_0^2\subset A_{\frac{1}{2}}$, $A_{\frac{1}{2}}^2\subset A_0$ and $A_0A_{\frac{1}{2}}\subset A_{\frac{1}{2}}$.\\
 Let $x=e+\alpha x_0$ be an element of weight $1$ of $A$ where $\alpha\in K$. The equality $2x^2x^4-x^2-x^4=0$ implies $\frac{\alpha^4}{2}(x_0^2)^2+2\alpha^5x_0^2x_0^3=0$, so $(x_0^2)^2=0$.\\
  Similarly, setting $x=e+\beta x_{\frac{1}{2}}$, we have $0=2x^2x^4-x^2-x^4=4\beta^3x_{\frac{1}{2}}^3+2\beta^4(x_{\frac{1}{2}}^4+(x_{\frac{1}{2}}^2)^2)$, hence the identities $x_{\frac{1}{2}}^3=0$ and $(x_{\frac{1}{2}}^{2})^{2} =0$ hold.\\
 Finally, by setting $x=e+\alpha x_0+\beta x_{\frac{1}{2}}$ be an element of weight $1$ of $A$ where $\alpha$ and $\beta$ are scalars. The equality $2x^2x^4-x^2-x^4=0$  implies $2\alpha^2\beta x_0^2x_{\frac{1}{2}}+6\alpha \beta^2 x_{\frac{1}{2}} (x_{0}x_{\frac{1}{2}})+ \alpha^3\beta[ x_{\frac{1}{2}}x_0^3+ 5x_0^2(x_{0}x_{\frac{1}{2}}) ]+ \alpha \beta^3[4x_{\frac{1}{2}}(x_{\frac{1}{2}}(x_{\frac{1}{2}}x_{0})))+2x_{\frac{1}{2}}(x_{0}x_{\frac{1}{2}}^{2})+8x_{\frac{1}{2}}^{2}(x_{0}x_{\frac{1}{2}})]+
	\alpha^2\beta^2[4x_{\frac{1}{2}}(x_0(x_0x_{\frac{1}{2}}))+x_{\frac{1}{2}}(x_{\frac{1}{2}}x_0^2)+8(x_{0}x_{\frac{1}{2}})^2+\frac{5}{2}x_0^2 x_{\frac{1}{2}}^2]=0$
By identifying the coefficients of $\alpha^i\beta^j$, $1\leq i+j\leq 4$ in the equality $2x^2x^4-x^2-x^4=0$, we have the seven identities.
\end{proof}
	
	\begin{lemm}\label{l3}
		If $A_{\lambda}=A_0=0$, for all $ x_{\bar{\lambda}}\in A_{\bar{\lambda}}$ and $x_{ \frac{1}{2}}\in A_{\frac{1}{2}} $, then:
		\begin{enumerate}
			\item[i)] $ x_{\frac{1}{2}}^3=0$;
			\item[ii)] $ x_{\bar{\lambda}}(x_{\frac{1}{2}}x_{\bar{\lambda}})=0$;
			\item[iii)] $ x_{\frac{1}{2}}(x_{\frac{1}{2}}x_{\bar{\lambda}})=0$;
			\item[iV)] $ x_{\frac{1}{2}}^{2}x_{\bar{\lambda}}=0$;
			\item[V)] $( x_{\frac{1}{2}}^{2})^2=0$;
			\item[Vi)] $(x_{\bar{\lambda}}x_{\frac{1}{2}})^2=0$;
			\item[Vii)] $x_{\frac{1}{2}}^{2}(x_{\bar{\lambda}}x_{\frac{1}{2}})=0$.
		\end{enumerate}
	\end{lemm}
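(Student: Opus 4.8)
The plan is to follow the method of Lemma~\ref{l2}, using the multiplication rules that Theorem~\ref{t1} provides once $A_{\lambda}=A_0=0$: namely $A_{\bar{\lambda}}A_{\bar{\lambda}}=0$, $A_{\frac12}A_{\frac12}\subset A_{\bar{\lambda}}$ and $A_{\bar{\lambda}}A_{\frac12}\subset A_{\frac12}$. Two of the seven identities are then immediate: since $x_{\frac12}^{2}\in A_{\frac12}A_{\frac12}\subset A_{\bar{\lambda}}$, we get $x_{\frac12}^{2}x_{\bar{\lambda}}\in A_{\bar{\lambda}}A_{\bar{\lambda}}=0$, which is iV), and $(x_{\frac12}^{2})^{2}\in A_{\bar{\lambda}}A_{\bar{\lambda}}=0$, which is V). For the remaining five identities I would only use the defining identity \eqref{e1}, specialized at suitable weight-one elements, just as in the proof of Lemma~\ref{l2}; no linearization is needed.

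First, to obtain i), take $x=e+\beta x_{\frac12}$, which has weight $1$. Using $ex_{\frac12}=\frac12 x_{\frac12}$, $x_{\frac12}^{2}\in A_{\bar{\lambda}}$ and $x_{\frac12}^{3}=x_{\frac12}x_{\frac12}^{2}\in A_{\frac12}$, one expands $x^{2},x^{3},x^{4}$ as polynomials in $\beta$ with coefficients among $e,x_{\frac12},x_{\frac12}^{2},x_{\frac12}^{3},x_{\frac12}^{4}$, substitutes into $2x^{2}x^{4}-x^{4}-x^{2}=0$ (valid since $\omega(x)=1$), and isolates the coefficient of $\beta^{3}$. Every degree-$3$ word in $x_{\frac12}$ is proportional to $x_{\frac12}^{3}$, so this produces an equation $c\,x_{\frac12}^{3}=0$ with $c\neq 0$ (concretely $c=\bar{\lambda}+1$ after simplification via $2\bar{\lambda}^{2}+\bar{\lambda}+3=0$), whence $x_{\frac12}^{3}=0$.

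For ii), iii), Vi) and Vii) I would take the general weight-one element $x=e+\alpha x_{\bar{\lambda}}+\beta x_{\frac12}$, expand $2x^{2}x^{4}-x^{4}-x^{2}=0$, and identify the coefficients of the monomials $\alpha^{i}\beta^{j}$. The key point is that, after invoking $A_{\bar{\lambda}}A_{\bar{\lambda}}=0$, $A_{\frac12}A_{\frac12}\subset A_{\bar{\lambda}}$, $A_{\bar{\lambda}}A_{\frac12}\subset A_{\frac12}$ together with the already established relations i), iV), V), every relevant coefficient collapses to a single surviving product up to a nonzero scalar: the coefficient of $\alpha^{2}\beta$ is a nonzero multiple of $x_{\bar{\lambda}}(x_{\frac12}x_{\bar{\lambda}})$, giving ii); that of $\alpha\beta^{2}$ a nonzero multiple of $x_{\frac12}(x_{\frac12}x_{\bar{\lambda}})$, giving iii); that of $\alpha^{2}\beta^{2}$ a nonzero multiple of $(x_{\bar{\lambda}}x_{\frac12})^{2}$, giving Vi); and that of $\alpha\beta^{3}$ a nonzero multiple of $x_{\frac12}^{2}(x_{\bar{\lambda}}x_{\frac12})$, giving Vii). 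One also notes that a weight-one element of the subalgebra $Ke\oplus A_{\bar{\lambda}}$ automatically satisfies \eqref{e1}, since $2\bar{\lambda}^{3}+\bar{\lambda}^{2}+3\bar{\lambda}=\bar{\lambda}(2\bar{\lambda}^{2}+\bar{\lambda}+3)=0$, so the pure $\alpha$-monomials impose no constraint.

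The only real difficulty is the bookkeeping: carefully expanding $x^{2}$, $x^{3}$, $x^{4}$ and then the product $x^{2}x^{4}$ according to the Peirce grading — the feature that distinguishes this from Lemma~\ref{l2} being that $e$ now acts on $A_{\bar{\lambda}}$ by the scalar $\bar{\lambda}\neq 0$, so each power of $x$ acquires extra $\bar{\lambda}$-factors — and then verifying that the scalar in front of each surviving product is nonzero. In every case the latter reduces, via the relation $2\bar{\lambda}^{2}+\bar{\lambda}+3=0$ together with $\bar{\lambda}\notin\{0,-1,\tfrac12\}$, to a one-line check.
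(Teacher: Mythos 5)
Your proposal is correct and matches the paper's intended argument: the paper's proof of this lemma is simply ``similar to the proof of the previous lemma,'' i.e.\ exactly your method of specializing $2x^{2}x^{4}-x^{2}-x^{4}=0$ at weight-one elements $e+\beta x_{\frac{1}{2}}$ and $e+\alpha x_{\bar{\lambda}}+\beta x_{\frac{1}{2}}$ and identifying the coefficients of $\alpha^{i}\beta^{j}$ using the Peirce relations $A_{\bar{\lambda}}^{2}=0$, $A_{\frac{1}{2}}^{2}\subset A_{\bar{\lambda}}$, $A_{\bar{\lambda}}A_{\frac{1}{2}}\subset A_{\frac{1}{2}}$ (and your computed scalar $\bar{\lambda}+1$ for the $\beta^{3}$ coefficient checks out, since $2\bar{\lambda}^{2}+\bar{\lambda}+3=0$). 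Your observation that iV) and V) already follow from the grading alone is a small, valid simplification over the paper's route.
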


	\begin{proof} It is similar to the proof of previous lemma.
	\end{proof}
	
Similarly, we also establish the following lemma.
	\begin{lemm}\label{l4}
		If $A_{\bar{\lambda}}= A_0=0$, for all $ x_{\lambda}\in A_{\lambda}$ and $x_{ \frac{1}{2}}\in A_{\frac{1}{2}} $, we have:
		\begin{enumerate}
			\item[i)] $x_{\frac{1}{2}}^3=0$;
			\item[ii)] $x_{\lambda}(x_{\frac{1}{2}}x_{\lambda})=0$;
			\item[iii)] $x_{\frac{1}{2}}(x_{\frac{1}{2}}x_{\lambda})=0$;
			\item[iV)] $x_{\frac{1}{2}}^2x_{\lambda}=0$;
			\item[V)] $(x_{\frac{1}{2}}^2)^2=0$;
			\item[Vi)] $(x_{\lambda}x_{\frac{1}{2}})^{2}=0$;
			\item[Vii)] $x_{\frac{1}{2}}^{2}(x_{\lambda}x_{\frac{1}{2}})=0$.
		\end{enumerate}
	\end{lemm}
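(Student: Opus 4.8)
The plan is to imitate the proof of Lemma \ref{l2}. Under the hypothesis $A_{\bar{\lambda}}=A_{0}=0$ the decomposition of Theorem \ref{t0} reads $A=Ke\oplus A_{\frac12}\oplus A_{\lambda}$, and Theorem \ref{t1} degenerates to the three rules
\[
A_{\frac12}A_{\frac12}\subseteq A_{\lambda},\qquad A_{\lambda}A_{\lambda}=0,\qquad A_{\lambda}A_{\frac12}\subseteq A_{\frac12}.
\]
Two of the seven identities are then immediate: since $x_{\frac12}^{2}\in A_{\lambda}$ we get $x_{\frac12}^{2}x_{\lambda}\in A_{\lambda}A_{\lambda}=0$ (identity iV)) and $(x_{\frac12}^{2})^{2}\in A_{\lambda}A_{\lambda}=0$ (identity V)). I also record the scalar relation $2\lambda^{2}+\lambda+3=0$ — equivalently $\lambda^{2}=-\tfrac12(\lambda+3)$ — which holds because $2X^{2}+X+3=2(X-\lambda)(X-\bar{\lambda})$; this will be used repeatedly to reduce the polynomials in $\lambda$ produced below.

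For the five remaining identities i), ii), iii), Vi), Vii) I would take an arbitrary weight-one element $x=e+\alpha x_{\lambda}+\beta x_{\frac12}$ with $\alpha,\beta\in K$, so that \eqref{e1} becomes $2x^{2}x^{4}-x^{2}-x^{4}=0$. Computing $x^{2}$, then $x^{3}$, then $x^{4}$, then $x^{2}x^{4}$, and reducing every product at each stage by the three rules above, one writes $2x^{2}x^{4}-x^{2}-x^{4}$ as a $K$-combination of iterated products of $x_{\lambda}$ and $x_{\frac12}$, bihomogeneous in $(\alpha,\beta)$. All contributions of total degree $\le 2$ cancel: those of $\alpha$-degree $\ge 2$ because they carry the factor $x_{\lambda}^{2}=0$, and the bidegree $(1,1)$ and $(0,2)$ parts by a short computation that invokes $2\lambda^{2}+\lambda+3=0$ (this is also why the substitution $x=e+\alpha x_{\lambda}$ alone yields nothing). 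Since $K$ is infinite, the coefficient of each monomial $\alpha^{i}\beta^{j}$ must vanish separately.

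It remains to read the identities off the right monomials. The unique iterated product of bidegree $(0,3)$ is $x_{\frac12}^{3}$, of bidegree $(2,1)$ is $x_{\lambda}(x_{\frac12}x_{\lambda})$ (the subproduct $x_{\lambda}x_{\lambda}$ being $0$), and of bidegree $(1,2)$ is $x_{\frac12}(x_{\frac12}x_{\lambda})$ (since $x_{\lambda}x_{\frac12}^{2}\in A_{\lambda}A_{\lambda}=0$); checking in each case that the scalar factor — a low-degree polynomial in $\lambda$, reduced by $\lambda^{2}=-\tfrac12(\lambda+3)$ — is non-zero gives i), ii) and iii). Substituting i), ii), iii) back into the bidegree $(2,2)$ coefficient collapses every one of its iterated products except $(x_{\lambda}x_{\frac12})^{2}$, yielding Vi), and into the bidegree $(1,3)$ coefficient collapses all but $x_{\frac12}^{2}(x_{\lambda}x_{\frac12})$, yielding Vii); the coefficients of the remaining higher-degree monomials produce only relations already implied by i)--vii). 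The whole argument is the mirror image, under $\lambda\leftrightarrow\bar{\lambda}$, of the proof of Lemma \ref{l3}.

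The main obstacle is the sheer size of the bookkeeping: the expansions of $x^{2},x^{3},x^{4},x^{2}x^{4}$ must be carried out while tracking each term back to the fixed finite list $x_{\frac12}^{3}$, $x_{\lambda}(x_{\frac12}x_{\lambda})$, $x_{\frac12}(x_{\frac12}x_{\lambda})$, $(x_{\lambda}x_{\frac12})^{2}$, $x_{\frac12}^{2}(x_{\lambda}x_{\frac12})$ (plus a handful of higher products that turn out to be consequences) and repeatedly collapsing scalar coefficients via $\lambda^{2}=-\tfrac12(\lambda+3)$. The only step that is not purely mechanical is verifying, for each of the five relevant monomials, that the scalar multiplying the surviving product is non-zero — the exact analogue of the ``$0,\tfrac12,\lambda,\bar{\lambda}$ are not roots of $\dots$'' verifications already performed in the proof of Theorem \ref{t1}; once these are done, the seven identities follow.
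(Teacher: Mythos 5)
Your proposal is correct and follows essentially the same route as the paper, whose proof of this lemma is simply declared ``similar'' to that of Lemma~\ref{l2}: substitute the weight-one element $x=e+\alpha x_{\lambda}+\beta x_{\frac{1}{2}}$ into $2x^{2}x^{4}-x^{2}-x^{4}=0$ and identify the coefficients of the monomials $\alpha^{i}\beta^{j}$, using the degenerate Peirce rules $A_{\frac{1}{2}}^{2}\subseteq A_{\lambda}$, $A_{\lambda}^{2}=0$, $A_{\lambda}A_{\frac{1}{2}}\subseteq A_{\frac{1}{2}}$ and the relation $2\lambda^{2}+\lambda+3=0$. Your observation that iV) and V) already follow from $A_{\lambda}A_{\lambda}=0$ is a small but genuine simplification over the brute-force identification, and your identification of which single product survives in each bidegree is accurate (e.g.\ the $(0,3)$ coefficient reduces to $(\lambda+1)x_{\frac{1}{2}}^{3}$ with $\lambda+1\neq 0$).
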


\section{Link with Bernstein algebras}
The following result gives the necessary and sufficient conditions for an algebra verifying the identity $2x^2x^4=\omega(x)^{2}x^4+\omega(x)^4x^2$ to be a Bernstein algebra.
	\begin{theorem}\label{t2}
		Let $ A= Ke\oplus A_{0}\oplus A_{\frac{1}{2}}\oplus A_{\lambda} \oplus A_{\bar{\lambda}}$ be an algebra satisfying the identity $2x^{2}x^{4}=\omega(x)^{2}x^{4}+\omega(x)^{4}x^{2} $.  Then, $A$ is a Bernstein algebra if and only if $A_{\lambda}=A_{\bar{\lambda}}=0$.
	\end{theorem}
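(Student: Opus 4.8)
The plan is to prove the two implications separately, in both directions using the Peirce multiplication table of Theorem~\ref{t1} together with the vanishing identities of Lemma~\ref{l2}.

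For the implication ``$A$ Bernstein $\Rightarrow A_{\lambda}=A_{\bar\lambda}=0$'' I would argue directly. Take $x_{\lambda}\in A_{\lambda}$. By Theorem~\ref{t1}~iV) we have $A_{\lambda}A_{\lambda}=0$, so $x_{\lambda}^{2}=0$; hence for $x=e+x_{\lambda}$, which has weight $1$ because $x_{\lambda}\in\ker\omega$, one computes $x^{2}=e+2\lambda x_{\lambda}$ and then $(x^{2})^{2}=e+4\lambda^{2}x_{\lambda}$. The Bernstein identity $(x^{2})^{2}=\omega(x)^{2}x^{2}=x^{2}$ forces $2\lambda(2\lambda-1)x_{\lambda}=0$, and since $\lambda=\frac{-1-i\sqrt{23}}{4}$ is neither $0$ nor $\frac12$ this gives $x_{\lambda}=0$, i.e. $A_{\lambda}=0$; the same computation with $x=e+x_{\bar\lambda}$ gives $A_{\bar\lambda}=0$. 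Conceptually this just records that a Bernstein algebra has no Peirce components outside the eigenvalues $1,\tfrac12,0$ of $L_{e}$.

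For the converse, assume $A_{\lambda}=A_{\bar\lambda}=0$, so $A=Ke\oplus A_{0}\oplus A_{\frac12}$ and Theorem~\ref{t1} reduces to $A_{0}^{2}\subset A_{\frac12}$, $A_{\frac12}^{2}\subset A_{0}$, $A_{0}A_{\frac12}\subset A_{\frac12}$. I want $(x^{2})^{2}=\omega(x)^{2}x^{2}$ for every $x$. Since the map $x\mapsto(x^{2})^{2}-\omega(x)^{2}x^{2}$ is homogeneous of degree $4$ in $x$, it suffices to treat $\omega(x)=0$ and $\omega(x)=1$ (for $\omega(x)=\mu\neq0$ one scales by $\mu^{-1}$). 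Writing $x=x_{0}+x_{\frac12}$, resp. $x=e+x_{0}+x_{\frac12}$, I would expand $x^{2}$ and $(x^{2})^{2}$, sort each into its $Ke$-, $A_{0}$- and $A_{\frac12}$-components using the three inclusions above, and observe that every cross term that survives — among them $(x_{0}^{2})^{2}$, $x_{\frac12}^{3}$, $(x_{\frac12}^{2})^{2}$, $x_{\frac12}x_{0}^{2}$, $x_{\frac12}(x_{0}x_{\frac12})$, $(x_{0}x_{\frac12})^{2}$, $x_{\frac12}^{2}(x_{0}x_{\frac12})$ — vanishes by Lemma~\ref{l2}. The computation should then collapse to $(x^{2})^{2}=0$ when $\omega(x)=0$ and to $(x^{2})^{2}=x^{2}$ (that is, $x^{2}$ idempotent) when $\omega(x)=1$, which is exactly the Bernstein identity in both cases.

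The main obstacle is not conceptual but lies in two cross terms, $x_{0}^{2}(x_{0}x_{\frac12})$ and $x_{0}^{2}x_{\frac12}^{2}$, which do not occur verbatim in Lemma~\ref{l2}. The first I would handle by applying Lemma~\ref{l2}~iV) with $x_{0}x_{\frac12}\in A_{\frac12}$ substituted for $x_{\frac12}$. For the second I would polarize Lemma~\ref{l2}~V) in the variable $x_{\frac12}$ and specialize the new slot to $x_{0}x_{\frac12}$, which (after using $(x_{0}x_{\frac12})^{2}=0$) yields $x_{\frac12}(x_{0}(x_{0}x_{\frac12}))=0$; plugging this, together with Lemma~\ref{l2}~iV) and Vi), into the $\alpha^{2}\beta^{2}$-coefficient identity already displayed inside the proof of Lemma~\ref{l2}, namely $4x_{\frac12}(x_{0}(x_{0}x_{\frac12}))+x_{\frac12}(x_{\frac12}x_{0}^{2})+8(x_{0}x_{\frac12})^{2}+\frac52 x_{0}^{2}x_{\frac12}^{2}=0$, forces $x_{0}^{2}x_{\frac12}^{2}=0$. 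With these two extra relations in hand, the expansions above close up and the converse follows; so the real work is the bookkeeping of Peirce components and the derivation of these last two vanishing identities from the polarized forms of Lemma~\ref{l2}.
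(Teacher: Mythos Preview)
Your argument is correct and follows the same route as the paper: compute $(x^{2})^{2}$ for a weight-$1$ element using the Peirce inclusions of Theorem~\ref{t1} and the vanishing identities of Lemma~\ref{l2}, then pass to general $x$ (you use homogeneity, the paper uses Zariski density). You are in fact more careful than the paper, which dismisses the forward implication as ``obvious'' and lists $x_{0}^{2}(x_{0}x_{\frac12})=0$ without justification while never mentioning $x_{0}^{2}x_{\frac12}^{2}$ at all; your derivations of these two extra vanishings fill genuine gaps --- though for the second one a shorter route is to project the $\alpha^{2}\beta^{2}$ relation in the proof of Lemma~\ref{l2} onto its $A_{\frac12}$-component, which (since $x_{\frac12}x_{0}^{2}=0$) yields $x_{0}^{2}x_{\frac12}^{2}=0$ immediately.
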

	
	\begin{proof}
Let $A= Ke\oplus A_0\oplus A_{\frac{1}{2}}\oplus A_{\lambda} \oplus A_{\bar{\lambda}}$ be an algebra verifying the identity $2x^2x^4=\omega(x)^2x^4+\omega(x)^4x^2$. Suppose that $A_{\lambda}=A_{\bar{\lambda}}=0 $. Let $x=e+x_0+x_{\frac{1}{2}}$ an element of weight $1$ in $A$.
 According to the lemma \ref{l2}, the quantities $ x_{\frac{1}{2}}^{3}, x_{\frac{1}{2}}x_{0}^{2}, x_{\frac{1}{2}}(x_{\frac{1}{2}}x_{0}),(x_{0}^{2})^{2}; (x_{\frac{1}{2}}^2)^2; x_{0}^{2}(x_{0}x_{\frac{1}{2}}), x_{\frac{1}{2}}^{2}(x_{0}x_{\frac{1}{2}})$ are zero. Therefore, we have $x^2=e+x_{\frac{1}{2}}+x_{\frac{1}{2}}^2+x_0^2+2x_0x_{\frac{1}{2}}$, $(x^2)^2=e+x_{\frac{1}{2}}+x_{\frac{1}{2}}^2+x_0^2+2x_0x_{\frac{1}{2}}$ and $(x^2)^2=x^2$. The set of elements of weight $1$ is dense in $A$ according to Zariski topology, thus $(x^2)^2=\omega(x)^2x^2$, $\forall x\in A$. Hence, $A$ is a Bernstein algebra. The reciprocal is obvious.
	\end{proof}
	
	Using the previous theorem and the characterization of Bernstein algebras which is Jordan algebras given in (\cite{Wa}) the following result.
	
	\begin{prop}
		Let $A=Ke\oplus A_0\oplus A_{\frac{1}{2}}\oplus A_{\lambda} \oplus A_{\bar{\lambda}}$ a algebra verifying the identity $2x^2x^4=\omega(x)^2x^4+\omega(x)^4x^2$. The following assertions are equivalent:
		\begin{enumerate}
\item[i)] $A$ is a Jordan algebra;
\item[ii)] $A$ is a power associative algebra;
\item[iii)] $A_{\lambda}=A_{\bar{\lambda}} =0$; $ A_0^{2}=0 $ and $x_0(x_0x_{\frac{1}{2}})=0$ for all $ x_{0}\in A_{0 }$ and $x_{ \frac{1}{2}}\in A_{\frac{1}{2}} $;
\item[iV)] $A$ is a principal train algebra of rank $3$ verifying the equation $x^{3}-\omega(x)x^2=0$. 	
		\end{enumerate}
	\end{prop}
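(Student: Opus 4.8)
The plan is to establish $(i)\Leftrightarrow(ii)\Leftrightarrow(iii)$ by reducing to the Bernstein case and invoking the characterization of \cite{Wa}, and then $(iii)\Leftrightarrow(iv)$ by a direct expansion of the train identity. The implication $(i)\Rightarrow(ii)$ is the Remark of the Preliminaries. For the rest of $(i)\Leftrightarrow(ii)\Leftrightarrow(iii)$, the point is that each of $(i)$ and $(ii)$ already forces $A_{\lambda}=A_{\bar\lambda}=0$; once this is known, Theorem \ref{t2} puts us inside the Bernstein algebras, for which \cite{Wa} says that the Jordan ones are exactly those with $A_0^2=0$ and $x_0(x_0x_{\frac{1}{2}})=0$ for all $x_0\in A_0$, $x_{\frac{1}{2}}\in A_{\frac{1}{2}}$, and that for Bernstein algebras the Jordan property is equivalent to power-associativity.

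To see that power-associativity (hence, via the Remark, also the Jordan hypothesis) forces $A_{\lambda}=A_{\bar\lambda}=0$: fix $\mu\in\{\lambda,\bar\lambda\}$ and $x_{\mu}\in A_{\mu}$ and put $x=e+x_{\mu}$, of weight $1$. By Theorem \ref{t1}, $x_{\mu}^2\in A_{\mu}A_{\mu}=0$, so $x^2=e+2\mu x_{\mu}$, $x^3=e+(2\mu^2+\mu)x_{\mu}$, $x^4=e+(2\mu^3+\mu^2+\mu)x_{\mu}$ and $(x^2)^2=e+4\mu^2x_{\mu}$. Power-associativity gives $(x^2)^2=x^4$, i.e. $\mu(2\mu-1)(\mu-1)x_{\mu}=0$; since $\mu=\frac{-1\pm i\sqrt{23}}{4}$ is none of $0,\frac{1}{2},1$, we get $x_{\mu}=0$, so $A_{\lambda}=A_{\bar\lambda}=0$ and $A$ is Bernstein by Theorem \ref{t2}. (If one prefers not to cite the power-associative half of \cite{Wa}, the two Peirce conditions of $(iii)$ can be read off directly by expanding $x^2x^2=x^4$ and $x^2x^3=xx^4$ on $e+x_0$ and $e+x_0+x_{\frac{1}{2}}$, simplifying with Lemma \ref{l2}, and equating coefficients of the monomials $\alpha^i\beta^j$.)

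For $(iii)\Rightarrow(iv)$, assume $A_{\lambda}=A_{\bar\lambda}=0$, $A_0^2=0$ and $x_0(x_0x_{\frac{1}{2}})=0$; then $A_0A_{\frac{1}{2}}\subset A_{\frac{1}{2}}$ and $A_{\frac{1}{2}}^2\subset A_0$ by Theorems \ref{t1}--\ref{t2}. For $x=e+x_0+x_{\frac{1}{2}}$ of weight $1$ one computes $x^2=e+u+v$ with $u=x_{\frac{1}{2}}+2x_0x_{\frac{1}{2}}\in A_{\frac{1}{2}}$ and $v=x_{\frac{1}{2}}^2\in A_0$, and then, expanding $x^3=xx^2$ and cancelling the vanishing terms ($x_0^2=0$; $x_0v=0$, since $v\in A_0$ and $A_0^2=0$; $x_{\frac{1}{2}}^3=0$ and $x_{\frac{1}{2}}(x_{\frac{1}{2}}x_0)=0$ by Lemma \ref{l2}; $x_0(x_0x_{\frac{1}{2}})=0$ by hypothesis), one obtains $x^3=e+u+v=x^2$. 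Since the weight-$1$ elements are Zariski dense, $x^3=\omega(x)x^2$ on all of $A$, so $A$ is a principal train algebra with the stated equation (of rank $3$ except in the degenerate case $A_0=0$, $A_{\frac{1}{2}}^2=0$, where the rank is $2$). Conversely, for $(iv)\Rightarrow(iii)$: taking $x=e+x_{\mu}$ in $x^3=\omega(x)x^2$ gives $\mu(2\mu-1)x_{\mu}=0$, hence $A_{\lambda}=A_{\bar\lambda}=0$ and $A$ is Bernstein (Theorem \ref{t2}), so Lemma \ref{l2} applies; the full linearization $x(yz)+y(xz)+z(xy)=\omega(x)yz+\omega(y)xz+\omega(z)xy$ of the train equation, evaluated at $x=e$, $y=z=x_0$, gives $\frac{1}{2}x_0^2=x_0^2$, hence $A_0^2=0$; and its partial linearization $2x(xy)+xy^2+2y(xy)+yx^2=2\omega(x)xy+\omega(x)y^2+\omega(y)x^2+2\omega(y)xy$, evaluated at $x=x_0$, $y=x_{\frac{1}{2}}$, together with $A_0^2=0$ and Lemma \ref{l2}, collapses to $x_0(x_0x_{\frac{1}{2}})=0$.

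The one conceptual ingredient is the Bernstein-Jordan characterization of \cite{Wa}; everything else is a finite sequence of (partial) linearizations of \eqref{e1} and of the train equation, where the only real care needed is to keep track of which Peirce summand each product lies in, so that the relevant identities of Lemmas \ref{l2}--\ref{l4} can be used. The only point to watch in the statement is that ``rank $3$'' in $(iv)$ should be understood as ``the train equation $x^3=\omega(x)x^2$ holds'', the actual rank dropping to $2$ exactly when $A_0=0$ and $A_{\frac{1}{2}}^2=0$.
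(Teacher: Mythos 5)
Your proof is correct and follows exactly the route the paper itself indicates (reduce to the Bernstein case via Theorem \ref{t2}, then invoke Walcher's characterization \cite{Wa}); in fact the paper states this proposition with no proof at all, and your write-up supplies precisely the missing verifications — notably that power-associativity, via $(x^2)^2=x^4$ evaluated on $e+x_\mu$ with $\mu\in\{\lambda,\bar{\lambda}\}$, forces $A_\lambda=A_{\bar{\lambda}}=0$ (without which the reduction to \cite{Wa} cannot start), together with the direct computations for $iii)\Leftrightarrow iV)$. Your remark that the rank in $iV)$ can drop to $2$ when $A_0=0$ and $A_{\frac{1}{2}}^2=0$ is a fair observation about the wording of the statement rather than a gap in your argument.
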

	
\section{Relation with principal train algebras}
	\begin{prop}\label{p2}
		Let $ A= Ke\oplus A_{0}\oplus A_{\frac{1}{2}}\oplus A_{\lambda} \oplus A_{\bar{\lambda}}$ an algebra satisfying the identity $2x^{2}x^{4}=\omega(x)^{2}x^{4}+\omega(x)^{4}x^{2} $. If $ A_{\frac{1}{2}} =0$ then $A$ is a principal train algebra satisfying the equation $x^{5}-\frac{1}{2}\omega(x)x^{4}+\omega(x)^{2}x^{3}-\frac{3}{2}\omega(x)^{3}x^{2}=0$.
	\end{prop}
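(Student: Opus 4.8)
The plan is to exploit the fact that, once $A_{\frac{1}{2}}=0$, the multiplication on $Ker\omega$ degenerates completely, so that every principal power of an arbitrary element can be written down explicitly.

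First I would record the structural consequences of Theorem \ref{t1}. With $A_{\frac{1}{2}}=0$ we have $Ker\omega = A_0\oplus A_\lambda\oplus A_{\bar\lambda}$, and Theorem \ref{t1} then forces $A_0A_0=A_0A_\lambda=A_0A_{\bar\lambda}=A_\lambda A_\lambda=A_\lambda A_{\bar\lambda}=A_{\bar\lambda}A_{\bar\lambda}=0$; that is, every product of two elements of $Ker\omega$ vanishes. Hence, writing $M:=Ker\omega$, we get $M^{2}=0$, while $e$ acts on $M$ through $\ell_e:=L_e|_{M}$, which by the proof of Theorem \ref{t0} is diagonalizable with eigenvalues $0,\lambda,\bar\lambda$ and is annihilated by $P(X)=4X^{4}+5X^{2}-3X$.

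Next I would compute the principal powers. Writing a generic element as $x=\gamma e+m$ with $\gamma=\omega(x)\in K$ and $m\in M$, I would prove by induction on $n\ge 1$ that
\[
x^{n} = \gamma^{n} e + \gamma^{n-1}\,p_n(\ell_e)(m),
\]
where $p_1=1$ and $p_{n+1}(t)=t\,p_n(t)+t$; concretely $p_2=2t$, $p_3=2t^{2}+t$, $p_4=2t^{3}+t^{2}+t$ and $p_5=2t^{4}+t^{3}+t^{2}+t$. In the induction step one expands $x^{n+1}=(\gamma e+m)\bigl(\gamma^{n}e+\gamma^{n-1}p_n(\ell_e)(m)\bigr)$; the only point is that the term $m\cdot\bigl(\gamma^{n-1}p_n(\ell_e)(m)\bigr)$ lies in $M^{2}$ and therefore vanishes, while $ew=\ell_e(w)$ for $w\in M$.

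Finally I would substitute these formulas into $x^{5}-\frac{1}{2}\omega(x)x^{4}+\omega(x)^{2}x^{3}-\frac{3}{2}\omega(x)^{3}x^{2}$. Its $Ke$-component is $\gamma^{5}\bigl(1-\frac{1}{2}+1-\frac{3}{2}\bigr)=0$, and its $M$-component is $\gamma^{4}\bigl(p_5-\frac{1}{2}p_4+p_3-\frac{3}{2}p_2\bigr)(\ell_e)(m)$. A short computation gives
\[
p_5-\frac{1}{2}p_4+p_3-\frac{3}{2}p_2 \;=\; 2t^{4}+\frac{5}{2}t^{2}-\frac{3}{2}t \;=\; \frac{1}{2}\,P(t),
\]
so this component equals $\frac{1}{2}\gamma^{4}P(\ell_e)(m)=0$. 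Hence $x^{5}-\frac{1}{2}\omega(x)x^{4}+\omega(x)^{2}x^{3}-\frac{3}{2}\omega(x)^{3}x^{2}=0$ for every $x\in A$, which is a train identity of degree $5$; thus $A$ is a principal train algebra satisfying the stated equation.

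I do not anticipate a genuine obstacle: the argument is essentially bookkeeping. The two points that need care are getting the recursion $p_{n+1}=t\,p_n+t$ correct (it rests entirely on $M^{2}=0$) and noticing that $p_5-\frac{1}{2}p_4+p_3-\frac{3}{2}p_2$ is exactly $\frac{1}{2}P(t)$ --- which is in fact forced, since the identity must vanish on each eigenspace of $\ell_e$, that is, at $t\in\{0,\lambda,\bar\lambda\}$. If one additionally wanted the rank to be exactly $5$, one would need to rule out a train identity of degree $\le 4$, which depends on which of $A_0,A_\lambda,A_{\bar\lambda}$ are non-zero; this is not required for the statement as phrased.
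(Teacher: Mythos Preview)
Your proof is correct and follows essentially the same route as the paper: both observe from Theorem~\ref{t1} that $A_{\frac{1}{2}}=0$ forces $(Ker\,\omega)^{2}=0$, then compute the principal powers explicitly and verify the train equation. The paper restricts to weight-$1$ elements, writes out the powers using the explicit eigenvalues $0,\lambda,\bar\lambda$, and then invokes Zariski density, whereas your operator-polynomial packaging via $p_n$ and the identity $p_5-\tfrac{1}{2}p_4+p_3-\tfrac{3}{2}p_2=\tfrac{1}{2}P$ handles arbitrary weight directly and avoids the density argument---a mild streamlining rather than a genuinely different method.
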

	
\begin{proof}
	$A_{\frac{1}{2}}$ being zero, we have $A_0^2=A_{\lambda}^2=A_{\bar{\lambda}}^2 =A_{\lambda}A_{0}=A_{\bar{\lambda}}A_{0}=A_{\bar{\lambda}}A_{\lambda}=0$.\\
For $x=e+x_0+x_{\lambda}+x_{\bar{\lambda}}$, we have $x^2=e+2\lambda x_{\lambda}+2\bar{\lambda} x_{\bar{\lambda}}$, $ x^3=e-3 x_{\lambda}-3 x_{\bar{\lambda}}$, $ x^{4}=e-2\lambda x_{\lambda}-2\bar{\lambda} x_{\bar{\lambda}}$, $ x^{5}=e+(2\lambda+3) x_{\lambda}+(2\bar{\lambda}+3) x_{\bar{\lambda}}$. So, $x^{5}-\frac{1}{2}x^{4}+x^{3}-\frac{3}{2}x^{2}=0$ and we obtain $x^5-\frac{1}{2}\omega(x)x^4+\omega(x)^2x^3-\frac{3}{2}\omega(x)^3x^2=0$ because the set of elements of weight $1$ is dense in $A$ according to Zariski's topology.
	\end{proof}
	
\begin{prop}\label{p3}
Let $A$ be an algebra satisfying the identity $2x^{2}x^{4}=\omega(x)^{2}x^{4}+\omega(x)^{4}x^{2 }$ Peirce decomposition $A= Ke\oplus A_0\oplus A_{\frac{1}{2}}\oplus A_{\lambda} \oplus A_{\bar{\lambda}}$ relative to an idempotent $e$.
 \begin{enumerate}
   \item[i)] If $A_0=A_{\bar{\lambda}}=0$, then $A$ satisfies the equation $x^3-(1+\lambda)\omega(x)x^2+\lambda\omega(x)^2x=0$;
   \item[ii)] If $A_0=A_{\lambda} =0$, then $A$ satisfies the equation $x^3-(1+\bar{\lambda})\omega(x)x^2+\bar{\lambda}\omega(x)^2x=0$.
 \end{enumerate}
\end{prop}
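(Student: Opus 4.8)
The plan is to reduce both cases to a direct computation on a generic element of weight one, using the multiplication rules of Theorem \ref{t1} together with the vanishing products supplied by Lemmas \ref{l3} and \ref{l4}, and then to pass to arbitrary $x$ by the Zariski-density argument already employed in Proposition \ref{p2}. I describe case i) in detail; case ii) is identical after exchanging the roles of $\lambda$ and $\bar{\lambda}$.

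For i), since $A_0=A_{\bar{\lambda}}=0$ we have $A=Ke\oplus A_{\frac{1}{2}}\oplus A_{\lambda}$, and Theorem \ref{t1} leaves only the products $A_{\frac{1}{2}}A_{\frac{1}{2}}\subseteq A_{\lambda}$, $A_{\lambda}A_{\lambda}=0$ and $A_{\lambda}A_{\frac{1}{2}}\subseteq A_{\frac{1}{2}}$, together with the eigenvalue relations $ex_{\frac{1}{2}}=\frac{1}{2}x_{\frac{1}{2}}$ and $ex_{\lambda}=\lambda x_{\lambda}$. Moreover the hypothesis $A_{\bar{\lambda}}=A_0=0$ is exactly that of Lemma \ref{l4}, so for all $a\in A_{\frac{1}{2}}$ and $b\in A_{\lambda}$ one has $a^{3}=0$, $a^{2}b=0$, $a(ab)=0$ and $b(ab)=0$.

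Now write a generic element of weight one as $x=e+a+b$ with $a\in A_{\frac{1}{2}}$, $b\in A_{\lambda}$, and set $u=a^{2}\in A_{\lambda}$ and $p=ab\in A_{\frac{1}{2}}$. A short computation gives $x^{2}=e+(a+2p)+(2\lambda b+u)$. Forming $x^{3}=x\,x^{2}$ and discarding the four products $a\,a^{2}=a^{3}$, $b\,a^{2}=a^{2}b$, $a(ab)$ and $b(ab)$, all zero by Lemma \ref{l4}, one obtains
\[
x^{3}=e+\bigl(a+(2+2\lambda)p\bigr)+\bigl((2\lambda^{2}+\lambda)b+(\lambda+1)u\bigr).
\]
Substituting into $x^{3}-(1+\lambda)x^{2}+\lambda x$ and separating the five components along $e$, $a$, $p$, $b$, $u$, every coefficient vanishes: the $e$- and $a$-coefficients are $1-(1+\lambda)+\lambda=0$, the $p$-coefficient is $(2+2\lambda)-2(1+\lambda)=0$, the $u$-coefficient is $(\lambda+1)-(1+\lambda)=0$, and the $b$-coefficient is $(2\lambda^{2}+\lambda)-2\lambda(1+\lambda)+\lambda=0$. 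Hence $x^{3}-(1+\lambda)x^{2}+\lambda x=0$ for every element of weight one, and homogeneity together with Zariski density yields $x^{3}-(1+\lambda)\omega(x)x^{2}+\lambda\omega(x)^{2}x=0$ for all $x\in A$.

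The only real point to watch is the last-but-one step: one must make sure that every term quadratic or cubic in $(a,b)$ which actually appears in $x^{3}$ — namely $a\,a^{2}$, $b\,a^{2}$, $a(ab)$ and $b(ab)$ — is literally, after using commutativity, one of the four products killed by Lemma \ref{l4}; otherwise stray elements of $A_{\frac{1}{2}}$ or $A_{\lambda}$ would remain and the target identity would fail. For ii), the hypothesis $A_0=A_{\lambda}=0$ is that of Lemma \ref{l3}, and running the same computation with $\bar{\lambda}$ in place of $\lambda$ gives $x^{3}-(1+\bar{\lambda})\omega(x)x^{2}+\bar{\lambda}\omega(x)^{2}x=0$.
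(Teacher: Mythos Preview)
Your proof is correct and follows essentially the same approach as the paper: reduce to a weight-one element $x=e+x_{\frac{1}{2}}+x_{\lambda}$, use the multiplication table from Theorem~\ref{t1} together with the vanishing products of Lemma~\ref{l4}, verify the cubic identity by direct computation, and extend to all of $A$ by Zariski density. The only cosmetic difference is that the paper organizes the computation as $x(x^{2}-x)=\lambda(x^{2}-x)$ rather than expanding $x^{3}$ and checking coefficients, but the underlying argument is identical.
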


\begin{proof}
Let $A= Ke\oplus A_{0}\oplus A_{\frac{1}{2}}\oplus A_{\lambda} \oplus A_{\bar{\lambda}}$ be an algebra satisfying the identity $2x^{2}x^{4}=\omega(x)^{2}x^{4}+\omega(x)^{4}x^{2}$. Suppose $A_0=A_{\bar{\lambda}} =0$.\\
Let $ x=e+x_{\frac{1}{2}}+x_{\lambda} $ be an element of weight $1$ of $A$. By exploiting the relations of the lemma \ref{l4}, we have: $x^{2}=e+x_{\frac{1}{2}}+2\lambda x_{\lambda}+x_{\frac{ 1}{2}}^{2}+2x_{\frac{1}{2}}x_{\lambda}$, $ x^{2}-x= (2\lambda-1)x_{\lambda} +x_{\frac{1}{2}}^{2}+2x_{\frac{1}{2}}x_{\lambda}$,
$x(x^2-x)=\lambda x_{\frac{ 1}{2}}^2+(2\lambda^2-\lambda)x_{\lambda}+2\lambda x_{\lambda}x_{\frac{1}{2}}=\lambda(x^2-x)$, so $x^3-(1+\lambda)x^2+\lambda x=0$. Since the set of elements of weight $1$ is dense in $A$ by the Zariski's topology, then for any $x$ in $A$, we have $x^3-(1+\lambda)\omega(x)x^2+\lambda\omega(x)^2x=0$.
The proof of assertion ii) is done like that of the first assertion..
\end{proof}

	\begin{prop}\label{p4}
Let $A$ be an algebra satisfying the identity $2 x^{2}x^{4}=\omega(x)^{2}x^{4}+\omega(x)^{4}x^{2} $; then $A$ is a principal train algebra of rank $3$ if and only its train equation is of the form  $x^3-(1+\gamma)\omega(x)x^2+\gamma\omega(x)^2x =0$, where  $\gamma\in\{0, \lambda, \bar{\lambda}\} $	
	\end{prop}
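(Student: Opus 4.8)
The content of the statement is really the forward implication, and the plan is to obtain it by linearizing the (cubic) train identity at the idempotent $e$ and then confronting the result with the quartic relation $4\ell_e^4+5\ell_e^2-3\ell_e=0$ already produced in the proof of Theorem \ref{t0} (here $\ell_e$ denotes left multiplication by $e$ restricted to $\ker\omega$). First I would fix the shape of the identity: if $A$ is a principal train algebra of rank $3$ it satisfies $x^3+\gamma_1\omega(x)x^2+\gamma_2\omega(x)^2x=0$ with $\gamma_1,\gamma_2$ uniquely determined (the difference of two such cubic identities, if nonzero, would make $A$ of rank $\le 2$), and evaluating at $x=e$, where $e^2=e^3=e$ and $\omega(e)=1$, gives $1+\gamma_1+\gamma_2=0$; setting $\gamma:=\gamma_2$ this becomes exactly $x^3-(1+\gamma)\omega(x)x^2+\gamma\omega(x)^2x=0$. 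So the only thing left to prove in this direction is $\gamma\in\{0,\lambda,\bar\lambda\}$.

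For that I would substitute $x=e+z$ with $z\in\ker\omega$ (so $\omega(x)=1$) and keep the part of the identity that is homogeneous of degree one in $z$. Since $x^2=e+2ez+z^2$ and $x^3=x\,x^2=e+\bigl(2e(ez)+ez\bigr)+(\text{terms of degree}\ge 2\text{ in }z)$, the degree-one component is
$$2\ell_e^2 z+\ell_e z-2(1+\gamma)\ell_e z+\gamma z=0\qquad\text{for all }z\in\ker\omega,$$
that is $2\ell_e^2-(1+2\gamma)\ell_e+\gamma I=0$. The quadratic $2t^2-(1+2\gamma)t+\gamma$ has discriminant $(1-2\gamma)^2$, hence factors as $(2t-1)(t-\gamma)$, so $(2\ell_e-I)(\ell_e-\gamma I)=0$ and the minimal polynomial $m$ of $\ell_e$ divides $(t-\tfrac12)(t-\gamma)$. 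On the other hand $m$ divides $4t(t-\tfrac12)(t-\lambda)(t-\bar\lambda)$ by Theorem \ref{t0}, a polynomial with four pairwise distinct roots; consequently every root of $m$ lies in $\{\tfrac12,\gamma\}\cap\{0,\tfrac12,\lambda,\bar\lambda\}$.

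If $\gamma\notin\{0,\lambda,\bar\lambda\}$ this intersection reduces to $\{\tfrac12\}$ (and it is nonempty because $\ker\omega\ne 0$ for an algebra of rank $3$), so $m=t-\tfrac12$, i.e. $\ell_e=\tfrac12 I$ on $\ker\omega$; equivalently $A=Ke\oplus A_{\frac12}$ with $A_0=A_\lambda=A_{\bar\lambda}=0$. By Theorem \ref{t1}(ii) we then have $A_{\frac12}^2\subset A_0\oplus A_\lambda\oplus A_{\bar\lambda}=\{0\}$, so every weight-one element $x=e+z$ ($z\in A_{\frac12}$) satisfies $x^2=e+z=x$, and by density of the weight-one elements in the Zariski topology $x^2=\omega(x)x$ on all of $A$ — so $A$ has rank at most $2$, contradicting the hypothesis. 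Hence $\gamma\in\{0,\lambda,\bar\lambda\}$. Conversely, if the train equation of $A$ has the displayed form with $\gamma\in\{0,\lambda,\bar\lambda\}$, then this minimal equation has degree $3$, so $A$ is a principal train algebra of rank $3$ by definition; that all three values of $\gamma$ actually occur for genuine rank-$3$ algebras of the class is covered by Proposition \ref{p3} (the cases $\gamma=\lambda$ and $\gamma=\bar\lambda$) and by Theorem \ref{t2} together with the proposition that follows it (the case $\gamma=0$).

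I expect the linearization to be the delicate point: one must read off the degree-one part of $x^3$ correctly and check that the coefficients of $\ell_e^2,\ell_e,I$ assemble into precisely $(2t-1)(t-\gamma)$, since it is the comparison of this quadratic with the quartic $4t(t-\tfrac12)(t-\lambda)(t-\bar\lambda)$ that pins $\gamma$ down; the exclusion of $\gamma=\tfrac12$ (and of any other spurious value) then rests only on the multiplication rule $A_{\frac12}^2\subset A_0\oplus A_\lambda\oplus A_{\bar\lambda}$ from Theorem \ref{t1}.
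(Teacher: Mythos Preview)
Your forward argument is correct and takes a genuinely different route from the paper. The paper never passes through the Peirce decomposition: it linearizes the cubic train identity in one variable to get $x^2y+2x(xy)-(1+\alpha)[\omega(y)x^2+2\omega(x)xy]+\alpha[2\omega(xy)x+\omega(x)^2y]=0$, substitutes $y=x^4$, and then feeds in both the degree-six identity $x^2x^4=\tfrac12\omega(x)^2x^4+\tfrac12\omega(x)^4x^2$ and the consequence $x^4=(1+\alpha+\alpha^2)\omega(x)^2x^2-(\alpha^2+\alpha)\omega(x)^3x$ of the train equation; after simplification one is left with $-\tfrac12\alpha(2\alpha^2+\alpha+3)\,\omega(x)^4(x^2-\omega(x)x)=0$, and rank $3$ forces the scalar factor to vanish, i.e.\ $\alpha\in\{0,\lambda,\bar\lambda\}$. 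Your approach instead extracts $(2\ell_e-I)(\ell_e-\gamma I)=0$ from the degree-one part of the train identity at $e$ and confronts it with the quartic $4\ell_e(\ell_e-\tfrac12 I)(\ell_e-\lambda I)(\ell_e-\bar\lambda I)=0$ of Theorem~\ref{t0}; the exclusion of extraneous $\gamma$ then comes from $A_{1/2}^2\subset A_0\oplus A_\lambda\oplus A_{\bar\lambda}$. This is shorter and reuses the structural machinery already in place; the paper's computation is idempotent-free and yields the vanishing polynomial $\alpha(2\alpha^2+\alpha+3)$ explicitly.

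One substantive difference is the converse. Under the literal wording (the degree-six identity is a standing hypothesis on $A$), your reading is defensible and the implication is indeed trivial. The paper, however, proves the stronger statement that \emph{every} principal train algebra with equation $x^3-(1+\gamma)\omega(x)x^2+\gamma\omega(x)^2x=0$, $\gamma\in\{0,\lambda,\bar\lambda\}$, automatically satisfies $2x^2x^4=\omega(x)^2x^4+\omega(x)^4x^2$: for $\gamma=0$ this is the Bernstein--Jordan case, and for $\gamma=\lambda$ (resp.\ $\bar\lambda$) one linearizes, sets $y=x^4$, and checks that $x^2x^4-\tfrac12\omega(x)^2x^4-\tfrac12\omega(x)^4x^2$ equals $-2\omega(x)^3$ times the train polynomial. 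Your references to Proposition~\ref{p3} and Theorem~\ref{t2} only exhibit particular algebras in the class for each $\gamma$; they do not give this blanket implication, which is what the paper intends by the converse.
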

	
\begin{proof}
	Let $A$ be an algebra satisfying the identity $2x^2x^4=\omega(x)^2x^4+\omega(x)^4x^2$.

Suppose $A$ is a principal train algebra of rank $3$, its equation is \begin{equation}\label{key}
x^3-(1+\alpha)\omega(x)x^2+\alpha\omega(x)^2x =0\quad with \quad \alpha \in K
\end{equation}
And a partial linearization of (\ref{key}) gives us \begin{equation}\label{keyli}
x^2y+2x(xy)-(1+\alpha)[\omega(y)x^{2}+2\omega(x)xy]+\alpha[2\omega(xy)x+\omega(x)^{2}y]=0
\end{equation}
setting $y=x^4$ in (\ref{keyli}), we have
\begin{equation}\label{key3}
x^2x^4+2x^6-(1+\alpha)\omega(x)^4x^2-2(1+\alpha)\omega(x) x^5+ 2\alpha\omega(x)^5x+\alpha\omega(x)^2x^4=0
\end{equation}
or $2x^6=2(1+\alpha)\omega(x)x^5-2\alpha\omega(x)^2x^4$, we also know that $x^2x^4=\frac{1}{2}\omega(x)^{2}x^{4}+\frac{1}{2}\omega(x)^{4 }x^{2}$; substituting $2x^6$ and $x^2x^4$ by their expressions in (\ref{key3}), we get
\begin{equation}\label{key4}
(\frac{1}{2}-\alpha)\omega(x)^{2}x^{4}-(\frac{1}{2}+\alpha)\omega(x)^{4} x^{2}+ 2\alpha\omega(x)^{5}x=0
\end{equation}
We can notice that $x^{3}=(1+\alpha)\omega(x)x^{2}-\alpha\omega(x)^{2}x $; which implies that $ x^{4}=(1+\alpha)\omega(x)x^{3}-\alpha\omega(x)^{2}x^{2}=(1+\alpha+\alpha^{2})\omega(x)^2x^2+(-\alpha^2-\alpha)\omega(x)^3x$. Substituting $x^{4}$ by its expression in (\ref{key4}), we get \\
$-\frac{1}{2}\alpha(2\alpha^{2}+\alpha+3)\omega(x)^{4}(x^{2}-\omega(x)x)= 0$. The algebra $A$ being of rank $3$ then $ x^{2}-\omega(x)x)\neq 0 $ which implies that $ -\frac{1}{2}\alpha(2\alpha^2+\alpha+3)=0$ hence $\alpha=0$, $\alpha=\lambda$ or $\alpha=\bar{\lambda}$.

Suppose $A$ is a principal train algebra of train equation  \begin{equation}\label{tr3}
x^3-(1+\alpha)\omega(x)x^2+\alpha\omega(x)^2x =0\quad with \quad \alpha \in \{0, \lambda, \bar{\lambda}\}
\end{equation}
If $\alpha=0$, $A$ is a Bernstein Jordan algebra (see \cite{Wa}) and therfore satisfies the identity $2x^2x^4=\omega(x)^2x^4+\omega(x)^4x^2$ (see \cite{WB}).

 For $\alpha=\lambda$, the  partial linearization of (\ref{tr3}) gives us \begin{equation}\label{keyL}
x^2y+2x(xy)-(1+\lambda)[\omega(y)x^{2}+2\omega(x)xy]+\lambda[2\omega(xy)x+\omega(x)^{2}y]=0
\end{equation}
 By setting $y=x^4$, we have $x^2x^4=-2x^6+(1+\lambda)[\omega(x)^4x^2+2\omega(x)x^5]-\lambda[2\omega(x)^5x+\omega(x)^2x^4]$, so
 $x^2x^4=[-2x^6+2(1+\lambda)\omega(x)x^5-2\lambda\omega(x)^2x^4]+\lambda\omega(x)^2x^4+ (1+\lambda)\omega(x)^4x^2-2\lambda\omega(x)^5x=(1+\lambda)\omega(x)^4x^2-2\lambda\omega(x)^5x+\lambda\omega(x)^2x^4$, hence

$x^2x^4-\frac{1}{2}\omega(x)^4x^2-\frac{1}{2}\omega(x)^2x^4=(\lambda^2+\frac{\lambda}{2}-\frac{1}{2})\omega(x)^3x^3+(-\lambda^2+\frac{3\lambda}{2}+\frac{1}{2})\omega(x)^4x^2-2\lambda
\omega(x)^5x$, so $x^2x^4-\frac{1}{2}\omega(x)^4x^2-\frac{1}{2}\omega(x)^2x^4=-2\omega(x)^3x^3+(2\lambda+2)\omega(x)^4x^2-2\lambda\omega(x)^5x$. Therefore $x^2x^4-\frac{1}{2}\omega(x)^4x^2-\frac{1}{2}\omega(x)^2x^4=-2\omega(x)^3(x^3-(\lambda+1)\omega(x)x^2+\lambda\omega(x)^2x)=0$ and $A$ satisfies the identity $2x^2x^4=\omega(x)^2x^4+\omega(x)^4x^2$. The proof is similar for $\alpha=\bar{\lambda}$.
	\end{proof}
	
	\begin{prop}\label{p5}
		Let $A=Ke\oplus A_0\oplus A_{\frac{1}{2}}\oplus A_{\lambda}\oplus A_{\bar{\lambda}}$ be an algebra satisfying the identity $2x^{2}x^{4}=\omega(x)^2x^4+\omega(x)^4x^2$; if $A$ is a principal train algebra of rank $4$, its train equation is one of the following forms:
		\begin{enumerate}
			\item[i)]  $ x^{4}-(1+\gamma)\omega(x)x^{3}+\gamma\omega(x)^{2}x^{2} =0$, $ \gamma \in \{0, \lambda, \bar{\lambda}\}$;
			\item[ii)] $ x^{4}-\frac{1}{2}\omega(x)x^{3}+\omega(x)^{2}x^{2}-\frac{3}{2}\omega(x)^{3}x =0 $;
			\item[iii)]  $ x^{4}-(\frac{3}{2}+\gamma)\omega(x)x^{3}+(\frac{1}{2}+\frac{3}{2}\gamma)\omega(x)^{2}x^{2}-\frac{1}{2}\gamma\omega(x)^{3}x =0$ ; $ \gamma \in \{\frac{1}{2}, \lambda, \bar{\lambda}\} $;
			\item[iV)]  $ x^{4}-(1+2\gamma)\omega(x)x^{3}+\gamma(\gamma+2)\omega(x)^{2}x^{2}-\gamma^{2}\omega(x)^{3}x =0$ ; $ \gamma \in \{\lambda, \bar{\lambda}\} $.
		\end{enumerate}
	\end{prop}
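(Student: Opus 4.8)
The plan is to play the two polynomial relations available for the operator $\ell_{e}=L_{e}/\ker\omega$ against each other: the quartic $4X^{4}+5X^{2}-3X$ that the degree‑six identity forces on $\ell_{e}$ (Theorem \ref{t0}), and a cubic that a rank‑$4$ train identity forces on it; the remaining freedom is then removed by a case analysis on the Peirce decomposition, using Propositions \ref{p2}, \ref{p3} and Lemmas \ref{l2}--\ref{l4}. First, notation: evaluating the train equation at $x=e$ gives $1+\gamma_{1}+\gamma_{2}+\gamma_{3}=0$, so $1$ is a root of $t^{3}+\gamma_{1}t^{2}+\gamma_{2}t+\gamma_{3}$; writing the remaining two roots as $\beta,\delta\in K$, the train equation takes the shape
\[
x^{4}-(1+\beta+\delta)\,\omega(x)x^{3}+(\beta+\delta+\beta\delta)\,\omega(x)^{2}x^{2}-\beta\delta\,\omega(x)^{3}x=0 .
\]
It therefore suffices to show that $\{\beta,\delta\}$ is one of the unordered pairs $\{0,\gamma\}$ with $\gamma\in\{0,\lambda,\bar{\lambda}\}$, $\{\lambda,\bar{\lambda}\}$, $\{\frac{1}{2},\gamma\}$ with $\gamma\in\{\frac{1}{2},\lambda,\bar{\lambda}\}$, or $\{\gamma,\gamma\}$ with $\gamma\in\{\lambda,\bar{\lambda}\}$: substituting each of these into the displayed equation reproduces exactly the four normal forms i)--iv).

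For the linear part, put $x=e+z$ with $z\in\ker\omega$. Then $x^{k}=e+p_{k}(\ell_{e})z+(\text{terms of degree}\ge 2\text{ in }z)$, where $p_{1}(X)=1$ and $p_{k+1}(X)=Xp_{k}(X)+X$, hence $p_{2}=2X$, $p_{3}=2X^{2}+X$, $p_{4}=2X^{3}+X^{2}+X$. Collecting the terms linear in $z$ in the train equation gives $Q(\ell_{e})=0$, where $Q(X)=p_{4}(X)-(1+\beta+\delta)p_{3}(X)+(\beta+\delta+\beta\delta)p_{2}(X)-\beta\delta\,p_{1}(X)$, and a direct expansion yields $Q(X)=2(X-\beta)(X-\delta)(X-\frac{1}{2})$. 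By Theorem \ref{t0}, $\ell_{e}$ is annihilated by the separable polynomial $P(X)=4X(X-\frac{1}{2})(X-\lambda)(X-\bar{\lambda})$, hence is diagonalizable with spectrum $S\subseteq\{0,\frac{1}{2},\lambda,\bar{\lambda}\}$, and $A_{\alpha}\ne 0\iff\alpha\in S$. Since $Q(\ell_{e})=0$, every element of $S$ is a root of $Q$, so $S\subseteq\{\beta,\delta,\frac{1}{2}\}$; equivalently, the minimal polynomial $\mu_{\ell_{e}}=\prod_{\alpha\in S}(X-\alpha)$ divides $Q$.

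Now I would bring in that the rank is exactly $4$. If $|S|\le 1$, say $A=Ke\oplus A_{\alpha}$, then $A_{\alpha}^{2}=0$ (Theorem \ref{t1}), so $x^{k}=e+p_{k}(\alpha)(x-e)$ for all $k$; thus $x,x^{2},x^{3}$ are linearly dependent and $A$ satisfies a train equation of degree $\le 3$ — impossible. Also $|S|=4$ is impossible since $\deg Q=3$. So $2\le|S|\le 3$. If $|S|=3$ then $\mu_{\ell_{e}}=Q/2$, so $\{\beta,\delta\}=S\setminus\{\frac{1}{2}\}$ is a pair of distinct elements of $\{0,\lambda,\bar{\lambda}\}$ — case i) or ii); if $|S|=2$ with $\frac{1}{2}\notin S$ then $\{\beta,\delta\}=S\subseteq\{0,\lambda,\bar{\lambda}\}$ — again case i) or ii). The remaining possibility is $\frac{1}{2}\in S$, $|S|=2$, say $S=\{\frac{1}{2},\alpha\}$ with $\alpha\in\{\beta,\delta\}\cap\{0,\lambda,\bar{\lambda}\}$. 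If $\alpha\in\{\lambda,\bar{\lambda}\}$ then $A_{0}=0$ and one of $A_{\lambda},A_{\bar{\lambda}}$ vanishes, so Proposition \ref{p3} already produces a degree‑$3$ identity, contradicting rank $4$; if instead $A_{\frac{1}{2}}=0$, Proposition \ref{p2} applies. The last subcase is $\alpha=0$ with $A_{\frac{1}{2}}\ne 0$: here $A_{\lambda}=A_{\bar{\lambda}}=0$, so $A$ is a Bernstein algebra (Theorem \ref{t2}), and I would finish by substituting the generic weight‑$1$ elements $x=e+x_{0}$ and $x=e+x_{0}+x_{\frac{1}{2}}$ into the train equation, reducing $x^{2},x^{3},x^{4}$ with the identities of Lemma \ref{l2} (and of Theorem \ref{t1}), and comparing the $A_{0}$‑ and $A_{\frac{1}{2}}$‑components; this forces $\gamma_{3}=0$ and pins the second train root to a value compatible with the Peirce data, so that the train equation is again one of the listed forms.

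The main obstacle is precisely this last step — and, more generally, any configuration in which the principal powers $x^{2},x^{3},x^{4}$ involve iterated products inside the Peirce spaces: those computations are lengthy, and it is exactly the degree‑six identity, through Lemma \ref{l2} and Theorem \ref{t1}, that makes them terminate and determines the admissible second train root. The remainder is bookkeeping: for each surviving pair $\{\beta,\delta\}$, substitution into $x^{4}-(1+\beta+\delta)\omega(x)x^{3}+(\beta+\delta+\beta\delta)\omega(x)^{2}x^{2}-\beta\delta\,\omega(x)^{3}x=0$ yields the equations i)--iv) of the statement.
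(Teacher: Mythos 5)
Your route is genuinely different from the paper's: where the paper invokes Theorem 5 of \cite{AC7} and Theorem 1 of \cite{AC8} to read off the Peirce decomposition of a rank-$4$ train algebra from its train roots and then matches it against Theorem \ref{t0}, you extract the constraint directly from the linear-in-$z$ part of the train equation, obtaining $Q(\ell_e)=0$ with $Q(X)=2(X-\beta)(X-\delta)(X-\tfrac12)$. That computation is correct ($p_2=2X$, $p_3=2X^2+X$, $p_4=2X^3+X^2+X$ do give $Q$ as claimed), and combined with the separability of $4X(X-\tfrac12)(X-\lambda)(X-\bar\lambda)$ it is a clean, self-contained substitute for the two citations. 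The cases $|S|=3$ and $|S|=2$ with $\tfrac12\notin S$ then really do force $\{\beta,\delta\}=S\setminus\{\tfrac12\}\subseteq\{0,\lambda,\bar\lambda\}$, and the elimination of $S=\{\tfrac12,\lambda\}$ and $S=\{\tfrac12,\bar\lambda\}$ via Proposition \ref{p3} is legitimate (a degree-$3$ train identity contradicts rank $4$).

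The genuine gap is the last surviving case, $S=\{0,\tfrac12\}$, and it is not a small one: your linearization gives only the inclusion $S\subseteq\{\beta,\delta,\tfrac12\}$, never the reverse, so in this case it pins down only one of the two roots ($0\in\{\beta,\delta\}$) and leaves the other completely free --- nothing so far excludes, say, $\delta=7$ with train polynomial $X(X-1)X\,(X-7)$, since $\mu_{\ell_e}=X(X-\tfrac12)$ still divides $2X(X-7)(X-\tfrac12)$. Determining $\delta$ requires the quadratic and cubic parts of the train identity in $z=x_0+x_{\frac12}$, reduced with Theorem \ref{t1} and Lemma \ref{l2}; you describe this step but do not perform it, and your assertion that it "forces $\gamma_3=0$ and pins the second train root" is exactly the content that needs proving. (This is precisely the work the paper outsources to Theorem 1 of \cite{AC8}, which guarantees that the second train root is $0$ or $\tfrac12$ here.) Two further points to repair: the clause "if instead $A_{\frac12}=0$, Proposition \ref{p2} applies" sits inside the case $\tfrac12\in S$, where $A_{\frac12}\neq 0$ by definition, and in any event Proposition \ref{p2} yields a degree-$5$ identity that does not by itself constrain a degree-$4$ train equation; and if the completed Bernstein case yields $\{\beta,\delta\}=\{0,\tfrac12\}$ (as the paper's own Case 2 with $\alpha_1=\tfrac12$, $\alpha_2=0$ indicates), the resulting equation $x^4-\tfrac32\omega(x)x^3+\tfrac12\omega(x)^2x^2=0$ is form iii) with $\gamma=0$, a value the statement excludes, so your final "bookkeeping" will not land in the list exactly as printed --- a discrepancy already present between the paper's statement and its own proof, but one you would need to flag rather than paper over.
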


	\begin{proof}
	Let $A= Ke\oplus A_0\oplus A_{\frac{1}{2}}\oplus A_{\lambda} \oplus A_{\bar{\lambda}}$ be an algebra satisfying the identity $2x^2x^4=\omega(x)^2x^4+\omega(x)^4x^2$. Assuming $A$ a principal train algebra of rank $4$, its train equation is of the form
$ x^4-(1+\alpha+\beta)\omega(x)x^3+\alpha\omega(x)^2x^2+\beta\omega(x)^3x =0$ with $\alpha,\beta \in K$ so its minimal train polynomial is $P(X)=X(X-1)(X-\alpha_1)(X-\alpha_{2})= X^4-(1+\alpha_1+\alpha_2)X^3+(\alpha_1+\alpha_2+\alpha_1\alpha_2)X^2-\alpha_1\alpha_2X$; we then notice that $ \alpha=\alpha_{1}+\alpha_{2}+\alpha_{1}\alpha_{2} $ and $\beta=-\alpha_1\alpha_2$ therefore
\begin{equation}\label{key5}
x^{4}-(1+\alpha_{1}+\alpha_{2})\omega(x)x^{3}+(\alpha_{1}+\alpha_{2}+\alpha_{1} \alpha_{2})\omega(x)^{2}x^{2}-\alpha_{1}\alpha_{2}\omega(x)^{3}x =0
\end{equation}
Now let us look at the different cases related to the train roots $\alpha_{1}$ and $\alpha_{2}$:\\
	
\textbf{ $1st$ Case}: $\alpha_1\neq \alpha_2$, $ \alpha_1\neq \frac{1}{2}$ and $\alpha_{2}\neq \frac{1}{2}$\\
By exploiting the theorem $5$ of \cite{AC7} and the theorem \eqref{t0}; we observe that $A$ admits relatively to an idempotent $e$, the following Peirce decomposition: $A=Ke\oplus A_{\frac{1}{2}}\oplus A_{\alpha_{1}}\oplus A_{\alpha_{2}}=Ke\oplus A_0\oplus A_{\frac{1}{2}}\oplus A_{\lambda} \oplus A_{\bar{\lambda}}$ then we have by identification $\alpha_{1}, \alpha_2 \in \{0, \lambda, \bar{\lambda}\}$. Indeed:\\
for $\alpha_1=0$ and $ \alpha_{2}=\lambda $, (\ref{key5}) becomes $ x^{4}-(1+\lambda)\omega(x)x^{3}+\lambda\omega(x)^{2}x^{2} =0$, \\
for $\alpha_1=0$ and $ \alpha_2=\bar{\lambda}$, the equation  (\ref{key5}) becomes\\ $ x^{4}-(1+\bar{\lambda})\omega(x)x^{3}+\bar{\lambda}\omega(x)^{2}x^{ 2} =0$, and \\
if $\alpha_1=\lambda$ and $\alpha_2=\bar{\lambda}$, (\ref{key5}) becomes $x^4-\frac{1}{2}\omega(x)x^3+\omega(x)^2x^2-\frac{3}{2}\omega(x)^3x =0$ \\

\textbf{ $ 2nd$ Case}: $\alpha_{1}\neq \alpha_{2}$ and $\alpha_{1}= \frac{1}{2}$\\
Considering the theorem $1$ of \cite{AC8} and the theorem \eqref{t0}, it follows that $A$ admits the following Peirce decomposition: $A=Ke\oplus A_{\frac{1 }{2}}\oplus A_{\alpha_{2}}$ with $\alpha_{2}\in\{0, \lambda, \bar{\lambda}\}$. The train equation is therefore one of the following forms:\\
For $ \alpha_{1}=\frac{1}{2}$ and $ \alpha_{2}=0$, (\ref{key5}) becomes $ x^{4}-\frac{3}{2}\omega(x)x^{3}+\frac{1}{2}\omega(x)^{2}x^{ 2}=0$; \\
For $ \alpha_{1}=\frac{1}{2}$ and $ \alpha_{2}=\lambda$, the equation (\ref{key5}) becomes\\ $ x^{4}-(\frac{3}{2}+\lambda)\omega(x)x^{3}+(\frac{1}{2}+\frac{3} {2}\lambda)\omega(x)^{2}x^{2}-\frac{1}{2}\lambda\omega(x)^{3}x=0$; \\
For $ \alpha_{1}=\frac{1}{2}$ and $ \alpha_{2}=\bar{\lambda}$, (\ref{key5}) becomes\\ $x^{4}-(\frac{3}{2}+\bar{\lambda})\omega(x)x^{3}+(\frac{1}{2}+\frac{3}{2}\bar{\lambda})\omega(x)^{2}x^{2}-\frac{1}{2}\bar{\lambda}\omega(x)^{3}x=0$. \\

\textbf{ $ 3rd $Case}: $ \alpha_{1}= \alpha_{2} $, $\alpha_{1}\neq \frac{1}{2}$ and $ \alpha_{2}\neq \frac{1}{2}$\\
	According to the theorem $1$ of \cite{AC8} and as $A$ admits nonzero idempotents, the Peirce decomposition of $A$ with respect to an idempotent $e$ is\\ $A= Ke\oplus A_{\frac{ 1}{2}}\oplus B$ with $B=N\cap Ker(\ell_e-\alpha_{1}I)^{2}$. If $ B=0 $, we have $ x=e+x_{\frac{1}{2}} $ and $ x^2=e+x_{\frac{1}{2}}$ so $ x^2=\omega(x)x $ which is an elementary Bernstein algebra and this contradicts the fact that $A$ is a train algebra of rank $4$. Otherwise, there are three possibilities. Indeed:
	\begin{enumerate}
\item [i)] $\alpha_1=\alpha_2=0 $ implies that the train equation of $A$ is $x^4-\omega(x)x^3=0$;
	\item [ii)]$ \alpha_1=\alpha_2=\lambda$  implies that the train equation of $A$ is\\ $x^4-(1+2\lambda)\omega(x)x^3+\lambda(\lambda+2)\omega(x)^2x^2-\lambda^2\omega(x)^3x =0$;
	\item [iii)]$ \alpha_1=\alpha_2=\bar{\lambda}$ implies that the train equation of $A$ is\\ $x^4-(1+2\bar{\lambda})\omega(x)x^3+\bar{\lambda}(\bar{\lambda}+2)\omega(x)^2x^2-\bar{\lambda}^2\omega(x)^3x =0$.
\end{enumerate}
	\end{proof}
\begin{defin}
  For any fixed $\alpha$ in $K$, we consider the map\\ $\varphi_{\alpha}:K[X]\rightarrow K[X]$, $P\mapsto (X-\alpha)P$
\end{defin}

We easily establish the following lemma.

\begin{lemm}
  For $\alpha\in \mathbb{C}$, we have $\varphi_{\alpha}\circ\varphi_{\bar{\alpha}}=\varphi_{\bar{\alpha}}\circ\varphi_{\alpha}$
\end{lemm}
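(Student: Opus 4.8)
The plan is to unwind the definition of $\varphi_{\alpha}$ and reduce the claim to the commutativity and associativity of multiplication in the polynomial ring $K[X]$. Given an arbitrary $P \in K[X]$, I would first compute $\varphi_{\alpha}(\varphi_{\bar{\alpha}}(P))$: by definition $\varphi_{\bar{\alpha}}(P) = (X-\bar{\alpha})P$, and applying $\varphi_{\alpha}$ to this yields $(X-\alpha)\bigl((X-\bar{\alpha})P\bigr)$, which by associativity equals $\bigl((X-\alpha)(X-\bar{\alpha})\bigr)P$.

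Symmetrically, I would compute $\varphi_{\bar{\alpha}}(\varphi_{\alpha}(P)) = (X-\bar{\alpha})\bigl((X-\alpha)P\bigr) = \bigl((X-\bar{\alpha})(X-\alpha)\bigr)P$. Since $K[X]$ is a commutative ring, $(X-\alpha)(X-\bar{\alpha}) = (X-\bar{\alpha})(X-\alpha)$ — both expand to $X^{2}-(\alpha+\bar{\alpha})X+\alpha\bar{\alpha}$ — so the two expressions coincide. As this equality holds for every $P \in K[X]$, the maps $\varphi_{\alpha}\circ\varphi_{\bar{\alpha}}$ and $\varphi_{\bar{\alpha}}\circ\varphi_{\alpha}$ are equal as endomorphisms of $K[X]$.

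There is essentially no obstacle here: the only facts used are that $\varphi_{\alpha}$ is by construction multiplication by the fixed polynomial $X-\alpha$, together with the commutativity and associativity of $K[X]$. One could even phrase the argument more generally by observing that for any two fixed polynomials $Q,R \in K[X]$ the operators ``multiplication by $Q$'' and ``multiplication by $R$'' commute, and then specializing $Q = X-\alpha$ and $R = X-\bar{\alpha}$; the hypothesis $\alpha\in\mathbb{C}$ serves only to make the conjugate $\bar{\alpha}$ meaningful and is otherwise irrelevant to the computation.
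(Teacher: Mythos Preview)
Your proof is correct and is exactly the routine verification the paper has in mind; the paper itself omits the argument entirely, stating only that the lemma is ``easily established,'' so your unwinding of the definition together with the commutativity and associativity of $K[X]$ is precisely what is intended.
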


the proof of following Lemma is similar to Lemma\ref{l2}
\begin{lemm}\label{l7}
 If $A_0=0$, then $A_{\frac{1}{2}}^2\subset A_{\lambda}\oplus A_{\bar{\lambda}}$, $A_{\frac{1}{2}}A_{\lambda}\subset A_{\frac{1}{2}}\oplus A_{\bar{\lambda}}$, $A_{\frac{1}{2}}A_{\bar{\lambda}}\subset A_{\frac{1}{2}}\oplus A_{\lambda}$, $A_{\lambda}^2=A_{\bar{\lambda}}^2=A_{\lambda}A_{\bar{\lambda}}=0$ and
  for all  $x_{ \frac{1}{2}}\in A_{\frac{1}{2}}$; $x_{\lambda}\in A_{\lambda}$; $x_{\bar{\lambda}}\in A_{\bar{\lambda}}$ the following identities are verified:
		\begin{enumerate}
\item[i)] $[(\lambda+1)x_{ \frac{1}{2}}(x_{ \frac{1}{2}}^2)_{\lambda}+(\bar{\lambda}+1)x_{ \frac{1}{2}}(x_{ \frac{1}{2}}^2)_{\bar{\lambda}}]_{1/2}=0$;
\item[ii)] $(2\lambda+3)(x_{ \frac{1}{2}}(x_{ \frac{1}{2}}x_{\lambda})_{\bar{\lambda}})_{\lambda}+(\lambda+6)(x_{ \frac{1}{2}}(x_{ \frac{1}{2}}x_{\lambda})_{\frac{1}{2}})_{\lambda}=0$;
\item[iii)] $x_{\lambda}(x_{\lambda}x_{\frac{1}{2}})=0$;
\item[iv)]$ x_{\bar{\lambda}}(x_{\bar{\lambda}}x_{\frac{1}{2}})=0$;
\item[v)] $(x_{\lambda}(x_{\bar{\lambda}}x_{\frac{1}{2}}))_{\bar{\lambda}}=0$;
\item[vi)] $(x_{\bar{\lambda}}(x_{\lambda}x_{\frac{1}{2}}))_{\lambda}=0$;
\item[vii)] $(2\lambda-1)(x_{\lambda}(x_{\bar{\lambda}}x_{\frac{1}{2}}))+(2\bar{\lambda}-1)(x_{\bar{\lambda}}(x_{\lambda}x_{\frac{1}{2}}))=0$.
\end{enumerate}
\end{lemm}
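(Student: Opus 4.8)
The plan is to follow the two-stage strategy already used for Lemma~\ref{l2}.

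\emph{Multiplication rules.} The inclusions $A_{\frac{1}{2}}^2\subset A_\lambda\oplus A_{\bar\lambda}$, $A_{\frac{1}{2}}A_\lambda\subset A_{\frac{1}{2}}\oplus A_{\bar\lambda}$, $A_{\frac{1}{2}}A_{\bar\lambda}\subset A_{\frac{1}{2}}\oplus A_\lambda$ and the relations $A_\lambda^2=A_{\bar\lambda}^2=A_\lambda A_{\bar\lambda}=0$ are read off directly from Theorem~\ref{t1}: items iii), iv), v) give the three vanishings with no hypothesis, while items ii), vii), viii) give the three inclusions once one sets $A_0=0$. No computation is needed here.

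\emph{The seven identities.} As in Lemma~\ref{l2}, take a generic element of weight one $x=e+\alpha x_{\frac{1}{2}}+\beta x_\lambda+\gamma x_{\bar\lambda}$ with $\alpha,\beta,\gamma\in K$, $x_{\frac{1}{2}}\in A_{\frac{1}{2}}$, $x_\lambda\in A_\lambda$, $x_{\bar\lambda}\in A_{\bar\lambda}$, and substitute it into \eqref{e1}, which (since $\omega(x)=1$) now reads $2x^2x^4-x^4-x^2=0$. Using $e^2=e$, $ex_{\frac{1}{2}}=\tfrac{1}{2}x_{\frac{1}{2}}$, $ex_\lambda=\lambda x_\lambda$, $ex_{\bar\lambda}=\bar\lambda x_{\bar\lambda}$ together with the multiplication rules above, one expands in turn $x^2$, $x^3=x\cdot x^2$, $x^4=x\cdot x^3$ and then the degree-six product $x^2x^4$, recording the Peirce component of every partial product and reducing the coefficients by means of $\lambda+\bar\lambda=-\tfrac{1}{2}$, $\lambda\bar\lambda=\tfrac{3}{2}$ and $2\lambda^2+\lambda+3=0$. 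Since $K$ is infinite, the relation $2x^2x^4-x^4-x^2=0$, viewed as a polynomial in $\alpha,\beta,\gamma$ with coefficients in $A$, forces every coefficient of every monomial $\alpha^i\beta^j\gamma^k$ to vanish, hence also each of its Peirce components. Matching these: the $A_{\frac{1}{2}}$-part of the $\alpha^3$-coefficient gives i); the $A_\lambda$-part of the $\alpha^2\beta$-coefficient gives ii); the $\alpha\beta^2$-coefficient gives iii), and its $\lambda\leftrightarrow\bar\lambda$ counterpart, the $\alpha\gamma^2$-coefficient, gives iv); and the $A_{\bar\lambda}$-, $A_\lambda$- and $A_{\frac{1}{2}}$-parts of the $\alpha\beta\gamma$-coefficient give v), vi) and vii) respectively.

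\emph{Main obstacle.} All the real work lies in the bookkeeping of the second stage: to form $x^4=x\cdot x^3$ and then $x^2x^4$ one must repeatedly resolve nested products — such as $x_{\frac{1}{2}}(x_{\frac{1}{2}}x_\lambda)$, $x_\lambda(x_{\bar\lambda}x_{\frac{1}{2}})$, or $x_{\frac{1}{2}}\,(x_{\frac{1}{2}}^2)_\lambda$ — into their $A_{\frac{1}{2}}$, $A_\lambda$ and $A_{\bar\lambda}$ parts via the multiplication rules, and the number of terms grows quickly. It is cleanest to grade the expansion by the multidegree $\alpha^i\beta^j\gamma^k$ from the outset and to use the specialisations $\beta=\gamma=0$, $\alpha=\gamma=0$, and so on as running checks; once the multidegree-graded form of $2x^2x^4-x^4-x^2$ is in hand, each of the seven displayed identities is simply one of its homogeneous components and nothing further is needed.
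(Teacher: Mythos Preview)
Your proposal is correct and follows exactly the approach the paper intends: the paper's own proof consists of the single sentence ``the proof of the following Lemma is similar to Lemma~\ref{l2}'', and your two-stage plan (read off the multiplication rules from Theorem~\ref{t1} with $A_0=0$, then substitute $x=e+\alpha x_{1/2}+\beta x_\lambda+\gamma x_{\bar\lambda}$ into $2x^2x^4-x^4-x^2=0$ and isolate the homogeneous components in $\alpha,\beta,\gamma$) is precisely that adaptation. If anything, your outline is more explicit than the paper's, in that you identify which monomial and which Peirce component produces each of i)--vii).
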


\begin{theorem}
 \label{p6}
Let $A=Ke\oplus A_0\oplus A_{\frac{1}{2}}\oplus A_{\lambda}\oplus A_{\bar{\lambda}}$ be an algebra satisfying the identity $2x^2x^4=\omega(x)^2x^4+\omega(x)^4x^2$ such that $A_0=0$. Let setting $\mu=X^2-X$. If $A$ is principal train algebra of rank $n\geq 4$, its train equation is of the following form:
$\omega(x)^n((\varphi_{\bar{\lambda}}^t\circ \varphi_{\lambda}^s\circ\varphi_{1/2}^r)(\mu)(\frac{x}{\omega(x)})=0$, $r\geq 0, s\geq 0, t\geq 0$ are integers and $r+t+s=n-2$.
\end{theorem}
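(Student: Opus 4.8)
\emph{The plan.} Since $\omega^{-1}(1)$ is Zariski dense in $A$, it suffices to prove the asserted identity for $x$ with $\omega(x)=1$, where it becomes $\overline{Q}(x)=0$ with $Q=(\varphi_{\bar\lambda}^{t}\circ\varphi_{\lambda}^{s}\circ\varphi_{1/2}^{r})(\mu)\in K[X]$ evaluated through the principal powers of $x$; this is legitimate because $\mu=X^{2}-X$ has no constant term and each $\varphi_\alpha$ preserves that property. A principal train algebra of rank $n$ has a unique monic minimal train polynomial $q\in K[X]$ of degree $n$ with $q(0)=0$. The decisive elementary remark is that for any $P\in K[X]$ with $P(0)=0$ one has, using $x^{k+1}=L_x(x^{k})$,
$$(\varphi_\alpha(P))(x)=x\cdot P(x)-\alpha\,P(x)=(L_x-\alpha\,\mathrm{id})\bigl(P(x)\bigr).$$
Hence, iterating and invoking the commutativity relation $\varphi_\alpha\circ\varphi_{\bar\alpha}=\varphi_{\bar\alpha}\circ\varphi_\alpha$ (so that the order of the factors is immaterial), the theorem is equivalent to the assertion that the minimal train polynomial factors as $q(X)=X(X-1)(X-\tfrac12)^{r}(X-\lambda)^{s}(X-\bar\lambda)^{t}$ for some integers $r,s,t\ge 0$; comparing degrees then gives $r+s+t=n-2$.

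First I would dispose of the two forced linear factors: $q$ has no constant term, so $X\mid q$, and evaluating the train equation at the idempotent $e$ (using $e^{k}=e$) forces $1+\gamma_{1}+\dots+\gamma_{n-1}=0$, i.e.\ $q(1)=0$, so $(X-1)\mid q$. Thus $q=X(X-1)r_{0}$ with $r_{0}$ monic of degree $n-2$.

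The heart of the argument is then to show that all roots of $q$ lie in $\{0,1,\tfrac12,\lambda,\bar\lambda\}$ and that $0$ and $1$ are simple. Note that $L_{e}$ restricted to $N=\ker\omega$ is $\ell_{e}$, whose minimal polynomial divides $X(X-\tfrac12)(X-\lambda)(X-\bar\lambda)$ by Theorem~\ref{t0}; since $A_{0}=\ker\ell_{e}=0$ we get $\mathrm{spec}(\ell_{e})\subseteq\{\tfrac12,\lambda,\bar\lambda\}$, so $(\ell_{e}-\alpha\,\mathrm{id})$ is invertible on $N$ for every $\alpha\notin\{\tfrac12,\lambda,\bar\lambda\}$ (it has a polynomial inverse coming from $P(\ell_e)=0$ with $P(\alpha)\neq 0$). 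As invertibility is a Zariski-open condition in $x$ which holds at $x=e$, the operator $(L_{x}-\alpha\,\mathrm{id})|_{N}$ is invertible for $x$ in a dense open subset of $\omega^{-1}(1)$. Now suppose $q$ had a root $\alpha\notin\{0,1,\tfrac12,\lambda,\bar\lambda\}$, or that $X^{2}\mid q$ (take $\alpha=0$), or that $(X-1)^{2}\mid q$ (take $\alpha=1$). In each case write $q=(X-\alpha)s=\varphi_\alpha(s)$ with $s$ monic of degree $n-1$; one checks that $s(0)=0$ (because $q(0)=0$; in the case $X^{2}\mid q$ this is exactly why $s=q/X$ is divisible by $X$) and $s(1)=0$ (because $q(1)=0$; in the case $(X-1)^{2}\mid q$ this is why $s=q/(X-1)$ still vanishes at $1$). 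Then $\overline{q}(x)=(L_{x}-\alpha\,\mathrm{id})(\bar s(x))$ with $\bar s(x)\in N$ for $\omega(x)=1$, so $(L_{x}-\alpha\,\mathrm{id})(\bar s(x))=0$; invertibility on a dense open subset forces $\bar s(x)=0$ there, hence on all of $\omega^{-1}(1)$ since $\bar s$ is a polynomial map, hence $\bar s$ is a train identity of degree $n-1$ of the required shape (monic, no constant term) — contradicting that the rank is $n\ge 4$. This proves the root restriction, and with it $q(X)=X(X-1)(X-\tfrac12)^{r}(X-\lambda)^{s}(X-\bar\lambda)^{t}$, $r+s+t=n-2$; re-homogenizing and using the operator identity of the first paragraph yields $\omega(x)^{n}\bigl((\varphi_{\bar\lambda}^{t}\circ\varphi_{\lambda}^{s}\circ\varphi_{1/2}^{r})(\mu)\bigr)(x/\omega(x))=0$, as claimed.

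The main obstacle is not conceptual but the bookkeeping in the third paragraph: one must be careful that dividing $q$ by the offending linear factor really produces a polynomial with zero constant term (so that evaluation through principal powers is defined and the clean factorization $(\varphi_\alpha(P))(x)=(L_x-\alpha\,\mathrm{id})(P(x))$ applies) and with value $0$ at $1$ (so that the result lands in $N$, where $(L_x-\alpha\,\mathrm{id})$ is generically invertible). All the genuinely structural input — that the only Peirce eigenvalues available are $0,\tfrac12,\lambda,\bar\lambda$ — is already contained in Theorem~\ref{t0} (and, behind it, in the degree‑six identity of Lemma~\ref{p1}); the extra hypothesis $A_{0}=0$ enters exactly to delete the eigenvalue $0$ from $\mathrm{spec}(\ell_{e})$, which is what forbids $X^{2}\mid q$ and hence pins the factor at $0$ down to the single mandatory $X$. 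One could alternatively run the argument inductively on $n$, peeling one factor $(X-\mu)$ at a time with Propositions~\ref{p2}--\ref{p5} as base cases, the commutativity of the $\varphi_\alpha$'s guaranteeing that the resulting form does not depend on the order of peeling; but the operator argument above avoids any case analysis.
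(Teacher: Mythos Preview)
Your argument is correct and is genuinely different from the paper's. The paper works constructively with the Peirce components: for a weight-$1$ element $x=e+x_{1/2}+x_\lambda+x_{\bar\lambda}$ it computes $x^2-x$ explicitly, then uses the multiplication table of Lemma~\ref{l7} (valid under $A_0=0$) to argue that successive applications of $(L_x-\tfrac12)$, $(L_x-\lambda)$, $(L_x-\bar\lambda)$ push the result through $A_{1/2}\oplus A_\lambda\oplus A_{\bar\lambda}\to A_\lambda\oplus A_{\bar\lambda}\to A_{\bar\lambda}\to 0$, producing the required train identity; Zariski density finishes. You instead analyse the minimal train polynomial $q$ directly: after disposing of the forced factors $X$ and $X-1$, the key operator identity $(\varphi_\alpha(P))(x)=(L_x-\alpha\,\mathrm{id})(P(x))$ lets you divide out any putative ``bad'' linear factor $(X-\alpha)$, and the generic invertibility of $(L_x-\alpha\,\mathrm{id})|_N$ (deduced from $\mathrm{spec}(\ell_e)\subset\{\tfrac12,\lambda,\bar\lambda\}$, which is exactly where $A_0=0$ enters) then produces a train identity of strictly smaller degree, contradicting the rank. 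What your route buys is that it bypasses the component-by-component bookkeeping with Lemma~\ref{l7} entirely and makes transparent why only the eigenvalues of $\ell_e$ can appear as train roots; the price is an implicit finite-dimensionality hypothesis needed for the Zariski-openness of invertibility (the paper also uses Zariski density arguments, so this is consistent with its standing assumptions). The paper's approach, by contrast, is more hands-on and in principle tells you \emph{how} the exponents $r,s,t$ arise from the structure of the algebra, but as written it is considerably sketchier than your version: the claimed existence of $r,s,t$ via Lemma~\ref{l7} is asserted rather than verified.
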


\begin{proof}
Let $x=e+x_{\frac{1}{2}}+x_{\lambda}+x_{\bar{\lambda}}$ an element of weight $1$ in $A$. We have $x^2-x=x_{\frac{1}{2}}^2+(2\lambda-1)x_{\lambda}+(2\bar{\lambda}-1)x_{\bar{\lambda}}+2x_{\frac{1}{2}}x_{\lambda}+2x_{\frac{1}{2}}x_{\bar{\lambda}}$. By setting $x^2-x=a_{\frac{1}{2}}+a_{\lambda}+a_{\bar{\lambda}}$ with $a_{\alpha}\in A_{\alpha}$, $\alpha\in \{\lambda, \bar{\lambda}\}$, we show using Lemma \ref{l7} that there exists an integer $r\geq 0$ such that $\varphi_{1/2}^r(\mu)(x)=a_{r,\lambda}+a_{r,\bar{\lambda}}$, $a_{r,\lambda}\in A_{\lambda}$, and $a_{r,\bar{\lambda}}\in A_{\bar{\lambda}}$. Similarly, there exists an integer $s\geq 0$ such that  $(\varphi_{\lambda}\circ\varphi_{1/2}^r)(\mu)(x)=b_{\bar{\lambda}}$ with $b_{\bar{\lambda}}\in A_{\bar{\lambda}}$.  Finally, for some integer $t\geq 0$, we have $(\varphi_{\bar{\lambda}}\circ\varphi_{\lambda}\circ\varphi_{1/2}^r)(\mu)(x)=0$. The set of element of weight $1$ being dense in $A$ according to Zariski topology, for any $x$ in $A$, we have $\omega(x)^n((\varphi_{\bar{\lambda}}^t\circ \varphi_{\lambda}^s\circ\varphi_{1/2}^r)(\mu)(\frac{x}{\omega(x)})=0$.
 \end{proof}

	\bibliographystyle{plain}

\end{document}